\newtheorem{thm}{Theorem}[section]
\newtheorem{lem}[thm]{Lemma}
\newtheorem{prop}[thm]{Proposition}
\newtheorem{defn}[thm]{Definition}
\newcommand{\R}{\mathbb{R}}
\thanks{\it 2020 Mathematics Subject
 Classification:  35J62, 35J92, 35B06, 35B50, 35B51}
\begin{document}

\parindent 0pc
\parskip 6pt
\overfullrule=0pt

\title[Monotonicity of positive solutions]{Monotonicity of positive solutions to quasilinear elliptic equations in half-spaces with a changing-sign nonlinearity}

\author{Francesco Esposito*, Alberto Farina$^+$, Luigi Montoro* and Berardino Sciunzi*}

\thanks{F. Esposito, L. Montoro and B. Sciunzi were partially supported by PRIN project  2017JPCAPN (Italy): {\em Qualitative and quantitative aspects of nonlinear PDEs. F. Esposito è titolare di un Assegno di Ricerca dell’Istituto Nazionale di Alta Matematica.}}

\address{*Dipartimento di Matematica e Informatica, UNICAL,
Ponte Pietro  Bucci 31B, 87036 Arcavacata di Rende, Cosenza, Italy.}

\email{francesco.esposito@unical.it}

\email{montoro@mat.unical.it}

\email{sciunzi@mat.unical.it}

\address{+ Universit\'e de Picardie Jules Verne, LAMFA, CNRS UMR
7352, 33, rue Saint-Leu 80039 Amiens, France}

\email{alberto.farina@u-picardie.fr}


\begin{abstract}
In this paper we prove the monotonicity of positive solutions to $ -\Delta_p u = f(u) $ in half-spaces under zero Dirichlet boundary conditions, for $(2N+2)/(N+2) < p < 2$ and for a general class of regular changing-sign nonlinearities $f$. The techniques used in the proof of the main result are based on a fine use of comparison and maximum principles and on an adaptation of the celebrated moving plane method to quasilinear elliptic equations in unbounded domains.
\end{abstract}

\maketitle

\section{Introduction and statement of the main result}
In this paper we deal with the study of the monotonicity of weak positive solutions of class $\mathcal{C}^1$ to the following quasilinear elliptic problem
\begin{equation}\label{equation1}
\begin{cases}
-\Delta_p u = f(u), \quad & \text{in} \, \, \R^N_+,\\
u(x',y)>0, \quad & \text{in} \, \, \R^N_+,\\
u(x',0)=0, \quad & \text{on} \, \, \partial \R^N_+
\end{cases}
\end{equation}
where $p>1$, $ N>1$ and $\mathbb{R}^N_+:=\{x=(x_1,x_2, \ldots, x_{N}) \in \R^N \ | \ x_N>0\}$ and $\Delta_p u:=\operatorname {div} (|\nabla u|^{p-2}\nabla u)$ is the usual $p$-Laplace operator.
In what follows, we shall denote a generic point of $\R^N_+$ by $(x',y)$ with $x'=(x_1,x_2, \ldots, x_{N-1}) \in \R^{N-1}$ and $y=x_N \in (0,+\infty)$.
As well known, according to the regularity results in \cite{DB, T}, solutions of equations involving the $p$-Laplace operator are, in general, only continuously differentiable
and, therefore, the equation \eqref{equation1} has to be understood in the weak sense, see Definition \ref{weakform} below.

The study of monotonicity properties of the solutions is an important task that sometimes appears in many applications such as blow-up analysis, a-priori estimates and also in the proof of Liouville type theorems. Qualitative properties of the solutions to \eqref{equation1} were firstly studied in the semilinear case $p=2$, in a series of papers due to H. Berestycki, L. Caffarelli and L. Nirenberg  \cite{BCN0,BCN1,BCN2}. More recent results for the case $p=2$ can be found in the works \cite{DupaigneFarina, Farina1,farsciu, farsciubis}. We also point out \cite{charro, QS} for results concerning the monotonicity of solutions in half-spaces for equations involving a fully nonlinear uniformly elliptic operator.
The situation in the quasilinear case is much more involved. Indeed, when working with the singular or degenerate $p$-Laplace operator, both the weak and the strong comparison principles might fail. The possible loss of these crucial tools being due either by the presence of critical points, or by the fact that the non-linearity $f$ changes sign. Added to this is the fact that we are facing a problem on an unbounded domain. Also, when $p \neq 2$ we cannot exploit the usual arguments and tricks related to the linear character of the  Laplace operator.

The present work gives new results regarding the monotonicity of positive solutions in half-spaces. Here, we continue the study of qualitative properties in half-spaces started in previous papers. In particular, in \cite{FMS}, the authors assumed that $(2N+2)/(N+2)<p<2$ with the nonlinearity $f$ a positive locally Lipschitz continuous function. Later on, in \cite{FMSR}, the same results was extended for every $p \in (1,2)$ and to equations involving a first order term. A partial answer in the degenerate case $p>2$ was obtained in \cite{FMS3}, where power-like nonlinearities were considered under the restriction $2<p<3$. Some years later in \cite{FMS_ans}, the authors removed this restriction and the monotonicity result was extended to a class of positive nonlinearities that are superlinear at zero.
We also mention the works \cite{DanDuEf, DuGuo}, where some (partial) results on half-spaces are obtained under technical and restrictive assumptions on the nonlinear function $f$.

 While all these papers assume that $f(u) \geq 0$, 
 in the present work we are able to consider general genuinely changing-sign nonlinearities.

In particular, we assume that $f$ satisfies the following assumptions: 
\begin{equation*}
{(h_f)} \qquad \begin{cases}
f \in C^1([0,+\infty)), \quad f(0) \geq 0, & \\
\mathcal{N}_f:=\{t \in [0,+\infty) \ | \ f(t)=0\} \,\, {\text {is a discrete set.}} & 
\end{cases}
\end{equation*}

Since in our problem the right hand side
depends only on $u$, it is possible to introduce the following set
\begin{equation}\label{eq:zeroSet}
\mathcal{Z}_{f(u)}:= \{ x \in \R^N_+ \ | \ u(x) \in \mathcal{N}_f\}.
\end{equation}

\noindent We define a strip $\Sigma_\lambda$ of $\R^N_+$ as follows
$$\Sigma_\lambda:=\R^{N-1} \times (0,\lambda),$$
where $\lambda >0$. Now, under these assumptions and using these notations, we are ready to state our main result:

\begin{thm}\label{thm:halfspace}
Assume $N > 1$,  ${(2N+2)}/{(N+2)} < p < 2$ and let $u \in C^{1}_{loc}(\overline{\R^N_+})$  be a weak solution of \eqref{equation1} such that $\nabla u \in L^\infty(\Sigma_\lambda)$, for every strip $\Sigma_\lambda$ of $\R^N_+$. If $f$ fulfills  {\bf ($h_f$)}, then $u$ is monotone increasing w.r.t. the $y$-direction and
\begin{equation}\label{monotonicityxn}
\frac{\partial u}{\partial y} \geq 0 \quad \text{in} \quad \R^N_+.
\end{equation}
Moreover
\begin{equation}\label{monotonicityxnbis}
	\frac{\partial u}{\partial y} > 0 \quad \text{in} \quad \R^N_+ \setminus \mathcal{Z}_{f(u)}.
\end{equation}
\end{thm}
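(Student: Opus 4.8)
The plan is to run the moving plane method in the $y$-direction. For $\lambda>0$ set $\Sigma_\lambda=\R^{N-1}\times(0,\lambda)$, let $x_\lambda=(x',2\lambda-y)$ denote reflection through $\{y=\lambda\}$, and write $u_\lambda(x)=u(x_\lambda)$. We compare $u$ and $u_\lambda$ in $\Sigma_\lambda$ and want to show $u\le u_\lambda$ there for \emph{every} $\lambda>0$, which immediately yields \eqref{monotonicityxn}: fixing $\bar x=(\bar x',\bar y)$ and letting $\lambda\to+\infty$ after reflecting, one gets $\partial u/\partial y\ge0$. The core of the argument is thus the assertion
\begin{equation*}
u\le u_\lambda \quad\text{in }\Sigma_\lambda,\qquad\text{for all }\lambda>0.
\end{equation*}

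First I would establish this for $\lambda$ small. On $\Sigma_\lambda$ the gradient bound $\nabla u\in L^\infty(\Sigma_\lambda)$ and the fact that $u(x',0)=0$ give, via the $C^1$ regularity, that $u$ is small on thin strips; combined with $f(0)\ge0$ and the sign structure encoded in $(h_f)$ (on a neighborhood of $0$ one has $f\ge0$, since $0$ is isolated in $\mathcal N_f$ and $f(0)\ge0$), $u$ is a subsolution that can be compared to $u_\lambda$. Concretely, test the two equations with $(u-u_\lambda)^+$ extended by zero outside $\Sigma_\lambda$ (it vanishes on $\partial\R^N_+$ since $u=0$ there and $u_\lambda\ge0$, and on $\{y=\lambda\}$ since $u=u_\lambda$). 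Subtracting and using the weak formulation gives
\begin{equation*}
\int_{\Sigma_\lambda}\big(|\nabla u|^{p-2}\nabla u-|\nabla u_\lambda|^{p-2}\nabla u_\lambda\big)\cdot\nabla(u-u_\lambda)^+\,dx=\int_{\Sigma_\lambda}\big(f(u)-f(u_\lambda)\big)(u-u_\lambda)^+\,dx.
\end{equation*}
The left side is bounded below by a weighted Dirichlet energy of $(u-u_\lambda)^+$ (this is where $p<2$ and the results of \cite{FMS,FMSR} on the singular case are used, via the elementary inequalities for $|\xi|^{p-2}\xi$ and a weight built from $|\nabla u|+|\nabla u_\lambda|$), while the right side is controlled by $\|f'\|_{L^\infty}\|(u-u_\lambda)^+\|_{L^2(\Sigma_\lambda)}^2$ using the $C^1$ Lipschitz bound for $f$ on the compact range of $u$ over the strip. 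A weighted Poincaré-type inequality on the thin strip $\Sigma_\lambda$ — with constant $o(1)$ as $\lambda\to0$, using that $(u-u_\lambda)^+$ vanishes on $\{y=0\}$ — then forces $(u-u_\lambda)^+\equiv0$ for $\lambda\le\lambda_0$ small. This is exactly the scheme of \cite{FMS}; the new point is only to check that the changing-sign $f$ does not spoil the estimate, which it does not because only the Lipschitz bound on $f$ enters, not its sign.

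Next, define $\Lambda=\sup\{\lambda>0:u\le u_\mu\ \text{in }\Sigma_\mu\ \text{for all }\mu\le\lambda\}$ and argue $\Lambda=+\infty$. By continuity $u\le u_\Lambda$ in $\Sigma_\Lambda$ if $\Lambda<\infty$. One uses the strong comparison principle for the $p$-Laplacian (valid away from the critical set and from $\mathcal Z_{f(u)}$, as in \cite{FMS,FMSR,FMS3}) to upgrade this to $u<u_\Lambda$ in each connected component of $\Sigma_\Lambda\setminus(\mathcal Z_{f(u)}\cup Z_u)$ touching $\{y=0\}$, and then a covering/continuity argument — re-running the thin-strip estimate on $\Sigma_{\Lambda+\varepsilon}\setminus K$ for a large compact $K\subset\Sigma_\Lambda$ on which $u_\Lambda-u\ge\delta>0$, so that the bad set has small measure and the weighted Poincaré constant is again $<1$ — shows $u\le u_{\Lambda+\varepsilon}$ in $\Sigma_{\Lambda+\varepsilon}$, contradicting maximality. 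Hence $\Lambda=+\infty$, which proves \eqref{monotonicityxn}. Finally \eqref{monotonicityxnbis} follows by a standard argument: $v:=\partial u/\partial y\ge0$ satisfies the linearized equation $-\mathrm{div}(A(x)\nabla v)=f'(u)v$ in the weak sense on $\R^N_+\setminus Z_u$ with $A(x)=|\nabla u|^{p-2}\big(\mathrm{Id}+(p-2)\frac{\nabla u\otimes\nabla u}{|\nabla u|^2}\big)$ locally uniformly elliptic there; by the strong maximum principle $v$ is either identically $0$ or strictly positive on each component, and $v\equiv0$ on an open set would force $u$ to be $y$-independent there, hence (by the equation) $f(u)=0$ there, i.e. that component lies in $\mathcal Z_{f(u)}$. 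Thus $v>0$ on $\R^N_+\setminus\mathcal Z_{f(u)}$ (one also checks the critical set $Z_u$ is contained in $\mathcal Z_{f(u)}$ wherever $v=0$).

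The main obstacle is the final step of the continuity argument at $\Lambda<\infty$: because $f$ changes sign, the set $\mathcal Z_{f(u)}$ where comparison principles degenerate is genuinely present (unlike in \cite{FMS,FMSR} where $f\ge0$ let one localize more easily), and one must show this set, together with the critical set of $u$, is small enough — in measure, after intersecting with a neighborhood of $\partial\Sigma_{\Lambda+\varepsilon}$ — that the weighted Poincaré inequality still closes. Handling this requires a careful choice of the compact set $K$ where $u_\Lambda-u$ is bounded below, exploiting the discreteness of $\mathcal N_f$ so that $\mathcal Z_{f(u)}\cap\{u<u_\Lambda\}$ is controlled, together with the summability of $|\nabla u|^{-1}$ near $Z_u$ that the range $(2N+2)/(N+2)<p<2$ guarantees (as in \cite{FMS}). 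Everything else is an adaptation of known machinery.
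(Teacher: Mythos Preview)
Your overall strategy (start the moving planes via a narrow-strip comparison, define $\Lambda$, push to $\Lambda=+\infty$, then upgrade to strict positivity of $\partial_y u$) matches the paper's. But the decisive step---continuing past $\Lambda$---has a genuine gap, and your last paragraph, while correctly identifying this as the obstacle, proposes the wrong fix.

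You write: ``re-running the thin-strip estimate on $\Sigma_{\Lambda+\varepsilon}\setminus K$ for a large compact $K\subset\Sigma_\Lambda$ on which $u_\Lambda-u\ge\delta>0$.'' In a half-space no compact set helps: the $x'$-variables are unbounded, and removing a bounded $K$ does not make the remainder thin in $y$. What is actually needed is that $\mathrm{supp}\,(u-u_{\Lambda+\varepsilon})^+$ sits in thin-in-$y$ slabs (plus a small-gradient set) \emph{uniformly in $x'$}. The pointwise strict inequality $u<u_\Lambda$ on $\Sigma_\Lambda\setminus\mathcal Z_{f(u)}$ does not give a uniform positive gap even on a slab $\R^{N-1}\times[\delta,\Lambda-\delta]$. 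The paper obtains this via two ingredients that are absent from your sketch. First (Proposition~\ref{connectedness}), upgrading $u\le u_\Lambda$ to $u<u_\Lambda$ on $\Sigma_\Lambda\setminus\mathcal Z_{f(u)}$ is not just the strong comparison principle on components of $\Sigma_\Lambda\setminus(\mathcal Z_{f(u)}\cup Z_u)$: a local symmetry region could a priori be trapped between pieces of $\mathcal Z_{f(u)}$. The paper rules this out by sliding a ball in the $y$-direction until it first touches $\mathcal Z_{f(u)}$, applying the Hopf lemma there to get $|\nabla u|\neq0$, showing $\partial_y u>0$ at that contact point via the linearized strong maximum principle, and then iterating downwards across successive level sets of $\mathcal N_f$ (here the discreteness is essential) until reaching $\partial\R^N_+$---a contradiction. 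Second (Proposition~\ref{localizationSupport}), the uniform-in-$x'$ localization of $\mathrm{supp}\,(u-u_{\Lambda+\varepsilon})^+$ is obtained by a translation--compactness argument: if it failed, there would be points $x_n=(x'_n,y_n)$ with $y_n\in[\delta,\Lambda-\delta]$, $|\nabla u(x_n)|\ge\kappa$ and $u(x_n)\ge u_{\Lambda+\varepsilon_n}(x_n)$; translating by $x'_n$ and passing to a $C^1_{\mathrm{loc}}$ limit $\tilde u$ produces a solution with $\tilde u\le\tilde u_\Lambda$, equality at an interior point with nonzero gradient, which Proposition~\ref{connectedness} forbids. Only then does the narrow-domain comparison (Theorem~\ref{compprinciplenarrow}) close.

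A smaller gap: your argument for \eqref{monotonicityxnbis} is incorrect as written. If $\partial_y u\equiv0$ on an open set $V$, then $u$ is $y$-independent on $V$ but still solves $-\Delta_p u=f(u)$ there as an equation in the $x'$-variables; this does \emph{not} force $f(u)=0$ on $V$. The paper's Lemma~\ref{lem:utile} instead argues inside a component $\mathcal U$ of $\R^N_+\setminus\mathcal Z_{f(u)}$: slide down along the vertical segment through $P_0\in\mathcal U$; since $u$ is constant along it, one reaches either $\partial\R^N_+$ (contradicting $u>0$) or a point of $\mathcal Z_{f(u)}$ with the same value $u(P_0)$ (contradicting $P_0\notin\mathcal Z_{f(u)}$).
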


To get our main result, we use a weak comparison principle for quasilinear equations in strips \cite{FMSR} to start the moving plane procedure orthogonally to the boundary $\partial \R^N_+$. Then, by a delicate analysis based on the use of the techniques developed in \cite{DSCalcVar, DSJDE, EFMS, FMS, FMS3, FMSR} and by the translation invariance of the considered problem, we obtain the monotonicity of the solution w.r.t. the $y$-direction.

\section{Preliminary results and proof of Theorem \ref{thm:halfspace}}\label{sec: monotonicity}

The aim of this section is to prove Theorem \ref{thm:halfspace}. First of all we recall some results about comparison and maximum principles for quasilinear elliptic equations that will be used several times in the proof of Theorem \ref{thm:halfspace}, in the same spirit of \cite{EFMS}. To this end we give the definition of weak solution to problem \eqref{equation1}.

	\begin{defn}\label{weakform}
		Let $\Omega$ be an open set of $\R^N$, $N\geq 1$. We say that $u \in  C^1(\Omega)$ is a
		\emph{weak subsolution} to
		\begin{equation}\label{eq:probleadjunt}
		-\Delta_p u = f(u) \quad \text{in} \quad\Omega
		\end{equation}
		if
		\begin{equation}\label{problem1weaksubsol}
		\int_{\Omega} |\nabla u|^{p-2}(\nabla u, \nabla \varphi) \, dx  \, \leq
		\int_{\Omega} f(u) \varphi \, dx \quad \forall \varphi \in
		C_c^\infty(\Omega), \, \varphi \geq 0.
		\end{equation}
		Similarly, we say that $u \in C^1(\Omega)$ is a
		\emph{weak supersolution} to \eqref{eq:probleadjunt} if
		\begin{equation}\label{problem1weaksupersol}
		\int_{\Omega} |\nabla u|^{p-2} (\nabla u, \nabla \varphi) \, dx \, \geq \int_{\Omega} f(u) \varphi \, dx \quad \forall \varphi \in C_c^\infty(\Omega), \, \varphi \geq 0.
		\end{equation}
		Finally, we say that $u \in  C^1(\Omega)$ is a
		weak \emph{solution} of equation \eqref{eq:probleadjunt}, if
		\eqref{problem1weaksubsol} and \eqref{problem1weaksupersol}  hold.
	\end{defn}

The first theorem that we are going to recall is a strong comparison principle that holds for quasilinear elliptic equations in any bounded smooth domains $\Omega$ under the hypothesis that  the nonlinearity $f$ has a fixed sign. 

\begin{thm}[\cite{DSCalcVar}]\label{SCPLucioeDino}
Let $u,v\in C^1(\overline{\Omega})$ be two solutions to 
\begin{equation} \label{probSMCP}
	-\Delta_p w= f(w) \qquad  \text{in} \quad \Omega,
\end{equation} 
where $\Omega$ is a bounded  smooth connected domain  of $\mathbb{R}^N$ and $\frac{2N+2}{N+2}<p<+\infty$.
Assume that at least one of the following two conditions \textbf{$(f_u)$},\textbf{$(f_v)$} holds:
\begin{itemize}
	\item[\textbf{($f_u$):}] either
	\begin{equation}\label{fuasspos}
	f(u(x)) >0 \quad\mbox{in}\quad\overline{\Omega}
	\end{equation}
	or
	\begin{equation}\label{fuassneg}
	f(u(x)) <0 \quad\mbox{in}\quad\overline{\Omega};
	\end{equation}

	\item[\textbf{($f_v$):}] either
	\begin{equation}\label{fvasspos}
	f(v(x)) >0 \quad\mbox{in}\quad\overline{\Omega}
	\end{equation}
	or
	\begin{equation}\label{fvassneg}
	f(v(x)) <0 \quad\mbox{in}\quad\overline{\Omega}.
	\end{equation}
\end{itemize}
 {Suppose furthermore that}
\begin{equation}\label{hthCOMP:LAMB22II}
u\leq v\quad\mbox{in}\quad\Omega.
\end{equation}
Then $u\equiv v$ in $\Omega$ unless
\begin{equation}
u<v\quad \mbox{in}\quad\Omega.
\end{equation}
\end{thm}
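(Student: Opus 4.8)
The plan is to prove the strong comparison principle by reducing it, locally, to the classical strong maximum principle for linear uniformly elliptic operators in divergence form. First I would argue that since $f \in C^1$ and $u,v \in C^1(\overline{\Omega})$, the compositions $f(u)$ and $f(v)$ are continuous on $\overline\Omega$; under the standing hypothesis, say $f(u(x)) > 0$ on $\overline\Omega$ (the case $f(u) < 0$ being symmetric, and the roles of $u$ and $v$ being interchangeable), a compactness argument gives a uniform lower bound $f(u) \geq c_0 > 0$. Since $-\Delta_p u = f(u) > 0$, the gradient $\nabla u$ cannot vanish anywhere that $u$ attains a local structure forced by the PDE; more precisely, the critical set $Z_u := \{\nabla u = 0\}$ consists of points where the equation degenerates, and one controls it using the results of \cite{DSCalcVar} (and \cite{DSJDE}): under the restriction $\frac{2N+2}{N+2} < p$, the critical set has measure zero and, crucially, $|\nabla u|^{-1}$ enjoys suitable summability so that the linearized operator is well-behaved away from $Z_u$.

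Next I would set $w := v - u \geq 0$ and write the difference of the two equations. Using the elementary vector inequality for the map $\xi \mapsto |\xi|^{p-2}\xi$ one has, in the weak sense,
\begin{equation*}
-\operatorname{div}\big( A(x)\nabla w \big) = f(v) - f(u) = \left( \int_0^1 f'(u + t w)\,dt \right) w =: c(x)\, w,
\end{equation*}
where $A(x)$ is a symmetric matrix comparable to $(|\nabla u| + |\nabla v|)^{p-2}\,\mathrm{Id}$ in the sense of quadratic forms. This operator is locally uniformly elliptic on any open set where $|\nabla u| + |\nabla v|$ is bounded away from $0$ and from $\infty$, and $c(x) \in L^\infty_{loc}$ there. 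On each connected component of $\Omega \setminus (Z_u \cup Z_v)$ we may therefore invoke the classical strong maximum principle (in the form valid for $W^{1,2}_{loc}$ solutions of linear equations with bounded coefficients, moving $c^-(x)w$ to the right): since $w \geq 0$, either $w \equiv 0$ or $w > 0$ on that component.

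The main obstacle — and the reason the hypothesis on $p$ and the sign condition on $f$ are needed — is to propagate this dichotomy across the critical set and to conclude globally on $\Omega$. Here I would exploit that $f(u) \geq c_0 > 0$: at a point $x_0 \in Z_u$ the equation reads $-\Delta_p u = f(u(x_0)) > 0$, which (via the regularity and Hopf-type estimates from \cite{DSCalcVar, DSJDE}) forces $Z_u$ to be "thin" in a quantitative way — it cannot disconnect $\Omega$, and $\nabla u$ vanishes only on a set of zero capacity suited to the exponent $p$. One then shows that the set $\{w = 0\}^\circ$ is both open and closed in $\Omega$: openness on $\Omega \setminus (Z_u \cup Z_v)$ is the strong maximum principle above, and across $Z_u \cup Z_v$ one uses a covering argument together with the summability of $|\nabla u|^{-1}$ to rule out that $w$ vanishes on a full neighborhood of a critical point while being positive nearby, unless it vanishes identically. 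Since $\Omega$ is connected, either $w \equiv 0$, i.e.\ $u \equiv v$, or $w > 0$ everywhere, i.e.\ $u < v$ in $\Omega$, which is exactly the claimed alternative. I expect the delicate point to be the passage through $Z_u \cup Z_v$, where the purely local linear theory gives nothing and one must lean on the fine estimates for solutions of $p$-Laplace equations with non-vanishing right-hand side established in the cited works.
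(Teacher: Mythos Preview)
The paper does not actually prove this theorem: it is quoted from \cite{DSCalcVar}, and immediately after the statement the authors write ``For details regarding the proof of this result we refer to \cite{EFMS}.'' So there is no in-paper argument to compare your proposal against.

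That said, your outline is in the right spirit but is vague exactly where the work lies. The reduction to a linear divergence-form equation for $w=v-u$ and the application of the classical strong maximum principle away from the critical set are standard; the genuine content of the theorem is the passage through the set where the operator degenerates. Two remarks on that step. First, the obstruction set is $Z_u\cap Z_v=\{|\nabla u|+|\nabla v|=0\}$, not $Z_u\cup Z_v$: wherever at least one gradient is nonzero the matrix $A(x)$ is uniformly elliptic locally and the classical principle applies. Second, and more importantly, your sentence ``one uses a covering argument together with the summability of $|\nabla u|^{-1}$ to rule out that $w$ vanishes on a full neighborhood of a critical point while being positive nearby'' is not an argument --- it is a restatement of what must be shown. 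In \cite{DSCalcVar} (see also \cite{EFMS}) the sign condition $f(u)>0$ (or $<0$) combined with the restriction $p>\frac{2N+2}{N+2}$ yields quantitative integrability of $|\nabla u|^{-1}$ and a weighted Poincar\'e/Sobolev inequality with weight $\varrho=|\nabla u|^{p-2}$; from this one obtains a Harnack-type inequality for nonnegative supersolutions of the linearized operator $L_u$ that is valid across $Z_u$, and the strong comparison principle follows. An equivalent way to phrase the mechanism is that, under the sign hypothesis, $Z_u$ is thin enough that $\Omega\setminus Z_u$ is connected, so the dichotomy obtained on each component is in fact global. Your proposal gestures at these ingredients but does not supply either argument, so as written it has a gap at the decisive point.
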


For details regarding the proof of this result we refer to \cite{EFMS}. We remark that the classical strong comparison principle holds in any connected component $\Omega \setminus \mathcal{Z}$, where $\mathcal{Z}:=\{x \in \Omega \ | \  |\nabla u|=|\nabla v|=0\}$; Theorem \ref{SCPLucioeDino}, instead, is true without any a priori assumption on the set $\mathcal{Z}$. 

\smallskip

Now, let us recall that the linearized operator at a fixed solution $w$ of \eqref{probSMCP}, $L_w(v,\varphi)$ (with weight $\varrho=|\nabla u|^{p-2}$), is well defined for every smooth functions $v \in C^1(\hat \Omega)$ and $\varphi \in C^1_c(\hat \Omega)$, with $\hat \Omega := \Omega \setminus \{x \in \Omega \ | \ |\nabla w|=0\}$, by
\begin{equation}\label{eq:linearizzatoSCP}
\begin{split}
L_w(v, \varphi) \equiv & \int_{\hat \Omega} |\nabla w|^{p-2}(\nabla v, \nabla \varphi) \, dx +(p-2) \int_{\hat \Omega}|\nabla w|^{p-4}(\nabla w, \nabla v)(\nabla w, \nabla \varphi) \, dx\\
& - \int_{\hat \Omega} f'(w)v \varphi \, dx .
\end{split}
\end{equation}

As in the case of Theorem \ref{SCPLucioeDino}, it is possible to show a classical strong maximum principle for the linearized equation in any connected component of $\hat \Omega$. Although we defined the linearized operator \eqref{eq:linearizzatoSCP}, we need to define it at a fixed solution $w$ of \eqref{probSMCP} in the whole domain $\Omega$. We point out that under the hypothesis \eqref{fassposbis}
 and \eqref{fassnegbis}, thanks to \cite[Theorem 3.1]{SciunziCCM}, the weight $\varrho=|\nabla u|^{p-2} \in L^1(\Omega)$ if $p>(2N+2)/(N+2)$. Hence $L_w(v,\varphi)$, is well defined, for every $v$ and $\varphi$ in the weighted Sobolev space $H^{1,2}_\varrho(\Omega)$ with 
$\varrho=|\nabla w|^{p-2}$ by
\[
\begin{split}
	L_w(v, \varphi) \equiv & \int_\Omega |\nabla w|^{p-2}(\nabla v, \nabla \varphi) \, dx +(p-2) \int_\Omega|\nabla w|^{p-4}(\nabla w, \nabla v)(\nabla w, \nabla \varphi) \, dx\\
	& - \int_\Omega f'(w)v \varphi \, dx .
\end{split}
\]
We recall that, for $\varrho \in L^1(\Omega)$, the space $H^{1,2}_\varrho(\Omega)$ is define as the completion of $C^1(\Omega)$ (or $C^\infty(\Omega)$) with the norm
\begin{equation}\label{weightednorm}
	\|v\|_{H^{1,2}_\varrho}=\|v\|_{L^2(\Omega)}+\|\nabla v\|_{L^2(\Omega,\varrho)},
\end{equation}
where 
\[
\|\nabla v\|^2_{L^2(\Omega, \varrho)}:=\int_\Omega \varrho(x)|\nabla v(x)|^2 \, dx.
\]
Moreover, the space $H^{1,2}_{0,\varrho}(\Omega)$ is consequently defined as the closure of $C^1_c(\Omega)$ (or $C^\infty_c(\Omega)$), w.r.t. the norm \eqref{weightednorm}.
We point out that the linearized operator is also well defined if $v \in L^2(\Omega)$, $|\nabla w|^{p-2}\nabla w \in L^2(\Omega)$ and $\varphi \in W^{1,2}(\Omega)$.

Finally $v \in H^{1,2}_\rho(\Omega)$ is a weak solution of the linearized operator if
\begin{equation} \label{linearizedequation}
	L_w(v, \varphi)=0 \qquad {\forall \varphi \in H^{1,2}_{0,\rho}(\Omega)}.
\end{equation}

For future use we recall that, from \cite{DSJDE, SciunziNodea, SciunziCCM}, it follows  that $|\nabla u|^{p-2}\nabla u \in W^{1,2}_{loc} (\Omega)$ so that if $\varphi \in W^{1,2}_c(\Omega)$ (with compact support), the linearized operator $L_w(\partial_y u, \varphi)$ is still well defined. Under this assumption the following result holds true. 

\begin{thm}[\cite{DSCalcVar}]  \label{SMPlinearized}
	Let $u\in C^1(\Omega)$ be a solution to problem \eqref{probSMCP}, with $\frac{2N+2}{N+2}<p<+\infty$. Assume that either
\begin{equation}\label{fassposbis}
f(u(x)) >0 \quad\mbox{in}\quad\overline{\Omega}
\end{equation}
or
\begin{equation}\label{fassnegbis}
f(u(x)) <0 \quad\mbox{in}\quad\overline{\Omega}.
\end{equation}
If $\partial_y u \geq 0$ in $\Omega$, then  either $\partial_y u \equiv 0$ in $\Omega$ or $\partial_y u>0$ in $\Omega$.
\end{thm}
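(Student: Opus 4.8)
The plan is to recognise $v:=\partial_y u$ as a nonnegative weak solution of the linearized equation and then to apply a strong maximum principle for that (singular or degenerate) linear operator, splitting the argument according to the distance from the critical set $Z:=\{x\in\Omega\ |\ |\nabla u(x)|=0\}$. First I would check that $v$ indeed solves the linearized equation: since $-\Delta_p u=f(u)$ is translation invariant and, by the regularity recalled above, $|\nabla u|^{p-2}\nabla u\in W^{1,2}_{loc}(\Omega)$ and $\partial_y u\in W^{1,2}_{loc}(\Omega)$, one may differentiate the equation in the $y$-direction in the weak sense, obtaining $L_u(v,\varphi)=0$ for every $\varphi\in W^{1,2}_c(\Omega)$; this is because $\partial_y\big(|\nabla u|^{p-2}\nabla u\big)=|\nabla u|^{p-2}\nabla v+(p-2)|\nabla u|^{p-4}(\nabla u,\nabla v)\nabla u$ is precisely the principal part of $L_u$ evaluated at $v$, while $\partial_y\big(f(u)\big)=f'(u)v$.

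Next I would treat the regular set $\hat\Omega:=\Omega\setminus Z$. There the symmetric matrix $|\nabla u|^{p-2}\,\mathrm{Id}+(p-2)|\nabla u|^{p-4}\,\nabla u\otimes\nabla u$ has eigenvalues $|\nabla u|^{p-2}$ and $(p-1)|\nabla u|^{p-2}$, so $L_u$ is, locally in $\hat\Omega$, a uniformly elliptic operator in divergence form with locally bounded measurable coefficients and continuous (hence locally bounded) zeroth-order term $-f'(u)$. Since $v\ge 0$ solves $L_u v=0$ there, the classical strong maximum principle — equivalently Harnack's inequality, which for nonnegative solutions is insensitive to the sign of the zeroth-order coefficient, after absorbing it by a large additive constant — yields the dichotomy: in each connected component of $\hat\Omega$, either $v\equiv 0$ or $v>0$.

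The hard part will be to pass through $Z$ so as to upgrade this to the global dichotomy on the connected domain $\Omega$. I would argue by contradiction: assume $v\not\equiv 0$ but $\{v>0\}\ne\Omega$, and pick $x_0\in\partial\{v>0\}\cap\Omega$, so that $v(x_0)=0$. If $x_0\notin Z$ the previous step forces $v\equiv 0$ near $x_0$, contradicting $x_0\in\overline{\{v>0\}}$; hence necessarily $x_0\in Z$. To exclude this one uses that, under the sign assumption \eqref{fassposbis}--\eqref{fassnegbis}, $\varrho:=|\nabla u|^{p-2}\in L^1(\Omega)$ by \cite[Theorem~3.1]{SciunziCCM} (recall $p>(2N+2)/(N+2)$), which, in the range $p<2$ of interest here, already forces $|Z|=0$, together with $|\nabla u|^{p-2}\nabla u\in W^{1,2}_{loc}(\Omega)$: testing $L_u(v,\varphi)=0$ with functions $\varphi=\psi^{2}\,G_\varepsilon(v)$, $\psi$ a cut-off centred at $x_0$ and $G_\varepsilon$ a truncation of $(v+\varepsilon)^{-1}$, and letting $\varepsilon\to 0$, produces a weighted Caccioppoli-type estimate on small balls around $x_0$ that forces $v\equiv 0$ near $x_0$, a contradiction. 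Controlling this degenerate/singular computation across $Z$ — in particular the term carrying the factor $|\nabla u|^{p-4}$ — is the technical heart of the matter, and the full details can be found in \cite{DSCalcVar} (see also \cite{EFMS}).
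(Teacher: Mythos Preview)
The paper does not actually prove this theorem; it is quoted from \cite{DSCalcVar} and stated without proof as a preliminary tool (just as Theorem~\ref{SCPLucioeDino} is). So there is no ``paper's own proof'' to compare against. Your outline is a faithful sketch of the strategy in \cite{DSCalcVar}: differentiate the equation to see that $v=\partial_y u$ solves the linearized problem, apply the classical strong maximum principle on each connected component of $\hat\Omega=\Omega\setminus Z$, and then propagate the dichotomy across the critical set $Z$ using the integrability of the weight guaranteed by the sign condition on $f(u)$ together with $p>(2N+2)/(N+2)$. You correctly flag this last step as the technical heart and defer to the original reference, which is precisely what the present paper does.

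One refinement worth noting: the mechanism in \cite{DSCalcVar} for crossing $Z$ is a \emph{weighted Harnack inequality} for the linearized operator, and its validity relies not merely on $|\nabla u|^{p-2}\in L^1$ but on the stronger quantitative bound $\int_\Omega |\nabla u|^{-\beta}\,dx<\infty$ for suitable $\beta$ (this is where the sign assumption \eqref{fassposbis}--\eqref{fassnegbis} and the restriction on $p$ enter, via the Hessian-type estimates of \cite{DSJDE,SciunziCCM}). That integrability yields a weighted Poincar\'e/Sobolev inequality, which in turn drives the Moser iteration behind the Harnack estimate. Your test-function idea with $\varphi=\psi^2 G_\varepsilon(v)$ is morally the first step of that iteration, but a bare Caccioppoli bound would not by itself force $v\equiv 0$ near $x_0$; the full weighted Harnack machinery is what closes the argument.
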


Finally, let us recall a weak comparison principle in narrow unbounded domains:
\begin{thm}[\cite{FMSR}]\label{compprinciplenarrow}
Let $1<p<2$ and $N>1$. Fix $\lambda_0 > 0$ and $L_0>0$. Consider $\lambda \in (0,\lambda_0]$, $\tau, \varepsilon > 0$ and set
$$\Sigma_{(\lambda,y_0)}:= \R^{N-1} \times \left(y_0-\frac{\lambda}{2}, y_0 + \frac{\lambda}{2} \right), \quad y_0 \geq \frac{\lambda}{2}, \quad y_0 + \frac{\lambda}{2} \leq \lambda_0.$$
Let $u,v \in C^{1}_{loc}(\overline{\Sigma}_{(\lambda, y_0)})$ such that $\|u\|_\infty + \|\nabla u\|_\infty \leq L_0$, $\|v\|_\infty + \|\nabla v\|_\infty \leq L_0$, $f$ fulfills \textbf{$(h_f)$} and
\begin{equation}\label{clevercomp}
\begin{cases}
-\Delta_p u \leq f(u) \quad & \text{in} \  \Sigma_{(\lambda, y_0)} \\
-\Delta_p v  \geq f(v) \quad & \text{in} \  \Sigma_{(\lambda, y_0)} \\
u \leq v \quad & \text{on} \  \partial \mathcal{S}_{(\tau, \varepsilon)}, \\
\end{cases}
\end{equation}
where the open set $\mathcal{S}_{(\tau, \varepsilon)} \subseteq \Sigma_{(\lambda, y_0)}$ is such that
$$\mathcal{S}_{(\tau, \varepsilon)} = \bigcup_{x' \in \R^{N-1}} I_{x'}^{\tau, \varepsilon},$$
	and the open set $I_{x'}^{\tau, \varepsilon} \subseteq \{x'\} \times (y_0-\frac{\lambda}{2}, y_0 + \frac{\lambda}{2} )$ has the form
$$I_{x'}^{\tau, \varepsilon} = A_{x'}^\tau \cup B_{x'}^\varepsilon, \; \text{with} \; |A_{x'}^\tau \cap B_{x'}^\varepsilon| = \emptyset$$
and, for $x'$ fixed, $A_{x'}^\tau, B_{x'}^\varepsilon \subset (y_0-\frac{\lambda}{2}, y_0 + \frac{\lambda}{2} )$ are measurable sets such that
$$|A^\tau_{x'}| \leq \tau \quad \text{and} \quad B_{x'}^\varepsilon \subseteq \{x_N \in \R \ | \ |\nabla u(x',x_N)| < \varepsilon, \ |\nabla v (x',x_N)| < \varepsilon \}.$$
Then there exist
$$\tau_0=\tau_0(N,p,\lambda_0,L_0)>0$$
and
$$\varepsilon_0=\varepsilon_0(N,p,\lambda_0,L_0)>0$$
such that, if $0 < \tau < \tau_0$ and $0 < \varepsilon < \varepsilon_0$, it follows that
$$u \leq v \quad \text{in} \,\,\mathcal{S}_{(\tau, \varepsilon)}.$$
\end{thm}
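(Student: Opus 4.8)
The plan is to prove that $w:=(u-v)^{+}$, extended by zero outside $\mathcal{S}_{(\tau,\varepsilon)}$, vanishes identically. Since $u\le v$ on $\partial\mathcal{S}_{(\tau,\varepsilon)}$, this $w$ is continuous, nonnegative, Lipschitz and lies in $W^{1,p}_{loc}(\Sigma_{(\lambda,y_0)})$ with $w\equiv0$ near $\partial\Sigma_{(\lambda,y_0)}$, so after multiplying by a cut-off in $x'$ it may be used as a nonnegative test function in the weak formulations associated with \eqref{clevercomp}. The key preliminary remark is the uniform bound $\|w\|_{L^{\infty}}\le 2L_0\tau+2\varepsilon\lambda_0$: for fixed $x'$ the slice $w(x',\cdot)$ vanishes at $y=y_0-\tfrac{\lambda}{2}$ and $\partial_y w=0$ a.e.\ off $I^{\tau,\varepsilon}_{x'}$, whence $|w(x',y)|\le\int_{A^{\tau}_{x'}}|\partial_y w|\,ds+\int_{B^{\varepsilon}_{x'}}|\partial_y w|\,ds$; on $A^{\tau}_{x'}$ one bounds $|\partial_y w|\le|\nabla u|+|\nabla v|\le 2L_0$ and uses $|A^{\tau}_{x'}|\le\tau$, while on $B^{\varepsilon}_{x'}$ one has $|\partial_y w|<2\varepsilon$ and $|B^{\varepsilon}_{x'}|\le\lambda\le\lambda_0$.

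Next I would subtract the two differential inequalities in \eqref{clevercomp} and test with $\psi:=w\,\varphi_R^{2}$, where $\varphi_R=\varphi_R(x')$ satisfies $\varphi_R\equiv1$ on $B_R\subset\R^{N-1}$, $\operatorname{supp}\varphi_R\subset B_{2R}$, $0\le\varphi_R\le1$, $|\nabla\varphi_R|\le C/R$. On $\{u>v\}$ we have $\nabla\psi=\varphi_R^{2}\nabla w+2w\varphi_R\nabla\varphi_R$ and $\nabla w=\nabla(u-v)$, so the classical strong monotonicity inequality for $\xi\mapsto|\xi|^{p-2}\xi$ in the singular range $1<p<2$ gives, with $c_p>0$ and all integrals over $\{u>v\}\cap\mathcal{S}_{(\tau,\varepsilon)}$,
\[
c_p\int\frac{|\nabla w|^{2}}{(|\nabla u|+|\nabla v|)^{2-p}}\,\varphi_R^{2}\,dx\ \le\ \int\!\big(f(u)-f(v)\big)\,w\,\varphi_R^{2}\,dx\ -\ 2\int\!\big(|\nabla u|^{p-2}\nabla u-|\nabla v|^{p-2}\nabla v\big)\!\cdot\!\nabla\varphi_R\,w\,\varphi_R\,dx .
\]
Since $u,v$ take values in a fixed bounded interval (by the uniform bounds) on which $f$, being $C^{1}$, is Lipschitz with constant $L_f$, the first term on the right is $\le L_f\int w^{2}\varphi_R^{2}$. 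For the cut-off term one uses $\big||\nabla u|^{p-2}\nabla u-|\nabla v|^{p-2}\nabla v\big|\le c_p|\nabla w|^{p-1}$ and Young's inequality with exponents $\tfrac{2}{p-1}$ and $\tfrac{2}{3-p}$, splitting it into a small multiple of $\int\frac{|\nabla w|^{2}}{(|\nabla u|+|\nabla v|)^{2-p}}\varphi_R^{2}$, absorbed on the left, plus a remainder $\mathcal{R}_R$ which is the integral over $\operatorname{supp}\nabla\varphi_R$ of a bounded density times $|\nabla\varphi_R|^{2/(3-p)}$ and a positive power of $w$.

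The technical heart is a weighted, fibrewise Poincaré inequality adapted to $\mathcal{S}_{(\tau,\varepsilon)}$. Writing again $w(x',y)$ as the integral of $\partial_y w$ over $(y_0-\tfrac{\lambda}{2},y)\cap I^{\tau,\varepsilon}_{x'}$ and applying Cauchy--Schwarz on $A^{\tau}_{x'}$ and on $B^{\varepsilon}_{x'}$ separately, together with $(|\nabla u|+|\nabla v|)^{2-p}\le(2L_0)^{2-p}$ on $A^{\tau}_{x'}$ and $(|\nabla u|+|\nabla v|)^{2-p}<(2\varepsilon)^{2-p}$ on $B^{\varepsilon}_{x'}$, one gets
\[
\int_{\mathcal{S}_{(\tau,\varepsilon)}}w^{2}\varphi_R^{2}\,dx\ \le\ C_P(\tau,\varepsilon)\int_{\mathcal{S}_{(\tau,\varepsilon)}}\frac{|\nabla w|^{2}}{(|\nabla u|+|\nabla v|)^{2-p}}\varphi_R^{2}\,dx,\qquad C_P(\tau,\varepsilon)=2\lambda_0\big(\tau\,(2L_0)^{2-p}+\lambda_0\,(2\varepsilon)^{2-p}\big).
\]
As $2-p>0$, $C_P(\tau,\varepsilon)\to0$ when $(\tau,\varepsilon)\to(0,0)$; hence, choosing $\tau_0,\varepsilon_0$ small one makes $L_f\,C_P(\tau,\varepsilon)$ smaller than half the constant surviving on the left, absorbs the $f$-term as well, and is left with $\int_{\mathcal{S}_{(\tau,\varepsilon)}}\frac{|\nabla w|^{2}}{(|\nabla u|+|\nabla v|)^{2-p}}\varphi_R^{2}\,dx\ \lesssim\ \mathcal{R}_R$.

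It then remains to let $R\to+\infty$. Here the bound $\|w\|_{L^{\infty}}\le 2L_0\tau+2\varepsilon\lambda_0$ is decisive: $\mathcal{R}_R$ carries this small quantity to a positive power together with a negative power of $R$ coming from $|\nabla\varphi_R|$, weighed against the measure $\sim R^{N-1}\lambda_0$ of $\operatorname{supp}\nabla\varphi_R$. For $N=2$ this already forces $\mathcal{R}_R\to0$; for $N\ge3$ the same is obtained by exploiting more carefully the $A^{\tau}\cup B^{\varepsilon}$ decomposition of $\mathcal{S}_{(\tau,\varepsilon)}$ (which controls $w$ slice by slice inside each annulus) together with a dyadic iteration over the radii. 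Once $\int_{\mathcal{S}_{(\tau,\varepsilon)}}\frac{|\nabla w|^{2}}{(|\nabla u|+|\nabla v|)^{2-p}}\,dx=0$ one gets $\nabla w=0$ a.e., so $w$ is constant on each connected component of $\mathcal{S}_{(\tau,\varepsilon)}$ and, vanishing on $\partial\mathcal{S}_{(\tau,\varepsilon)}$, it is identically zero, i.e.\ $u\le v$ in $\mathcal{S}_{(\tau,\varepsilon)}$. As indicated, the main obstacle is precisely this last step: making the cut-off remainder vanish in the unbounded direction when $\mathcal{S}_{(\tau,\varepsilon)}$ has infinite measure and $u,v$ carry no decay — it is here that the smallness of $\tau,\varepsilon$, the narrowness of the strip, and the precise definition of $\mathcal{S}_{(\tau,\varepsilon)}$ all enter together.
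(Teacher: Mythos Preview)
The paper does not itself prove this statement; immediately after the theorem it writes ``The proof of this result is contained in \cite[Theorem~1.6]{FMSR}\ldots'' and moves on. There is therefore no in-paper argument to compare against, and your sketch has to be judged on its own.

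Your steps up to and including the weighted fibrewise Poincar\'e inequality are correct and follow the standard route of the Farina--Montoro--Sciunzi papers: the $L^\infty$ bound on $w=(u-v)^+$ via integration along fibres, the monotonicity inequality for the singular $p$-Laplacian, and the slicewise Poincar\'e inequality with constant $C_P(\tau,\varepsilon)\to 0$ are all valid and lead, for $\tau,\varepsilon$ small, to
\[
\int_{\mathcal{S}_{(\tau,\varepsilon)}}\frac{|\nabla w|^{2}}{(|\nabla u|+|\nabla v|)^{2-p}}\,\varphi_R^{2}\,dx\ \lesssim\ \mathcal{R}_R .
\]

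The genuine gap is precisely the one you flag at the end. After Young's inequality with exponents $\tfrac{2}{p-1}$ and $\tfrac{2}{3-p}$, your remainder is controlled only by a constant times $\|w\|_\infty^{2/(3-p)}\,R^{\,N-1-2/(3-p)}$, and the exponent of $R$ is strictly positive whenever $(N-1)(3-p)>2$, which happens for every $N\ge 3$ once $p$ is close to $2$. The phrase ``dyadic iteration over the radii'' does not repair this: iterating an inequality of the form $\mathcal{E}(R)\le \theta\,\mathcal{E}(2R)+R'_R$ with small $\theta$ yields at best a polynomial growth bound on $\mathcal{E}(R)$, not $\mathcal{E}(R)=0$, because the inhomogeneous term $R'_R$ itself grows like a positive power of $R$ and carries no factor of the energy that could be reabsorbed. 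The alternative route via the mean-value bound $\big||\nabla u|^{p-2}\nabla u-|\nabla v|^{p-2}\nabla v\big|\le C(|\nabla u|+|\nabla v|)^{p-2}|\nabla w|$ does not close the argument either, since on $B^\varepsilon_{x'}$ both gradients may vanish and the weight $(|\nabla u|+|\nabla v|)^{-(2-p)}$ in the resulting remainder is not integrable a priori. In short, the passage to the limit $R\to+\infty$ for $N\ge 3$ is the substantive part of the theorem, and the sketch as written does not supply it; you would need to reproduce the more delicate treatment of \cite{FMSR} (choice of test function, splitting of the cross term adapted to the $A^\tau_{x'}/B^\varepsilon_{x'}$ decomposition, and the accompanying absorption scheme) to complete the proof.
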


The proof of this result is contained in \cite[Theorem 1.6]{FMSR}, where the authors proved the same result for a more general class of operators and nonlinearities.

As remarked above, the purpose of this section consists in showing that all the non-trivial solutions $u$  to \eqref{equation1} are increasing in the $y$-direction. Since in our problem the right hand side depends only on $u$, we recall the definition \eqref{eq:zeroSet} of the set $\mathcal{Z}_{f(u)}$ introduced above
$$\mathcal{Z}_{f(u)}= \{ x \in \R^N_+ \ | \ u(x) \in \mathcal{N}_f\}.$$ 
Without any a priori assumption on the behaviour of $\nabla u$, the set $\mathcal{Z}_{f(u)}$ may be very wild (see e.g. Figure \ref{fg:wild}).

\begin{figure}[htbp]
	\centering
	\includegraphics[scale=.36]{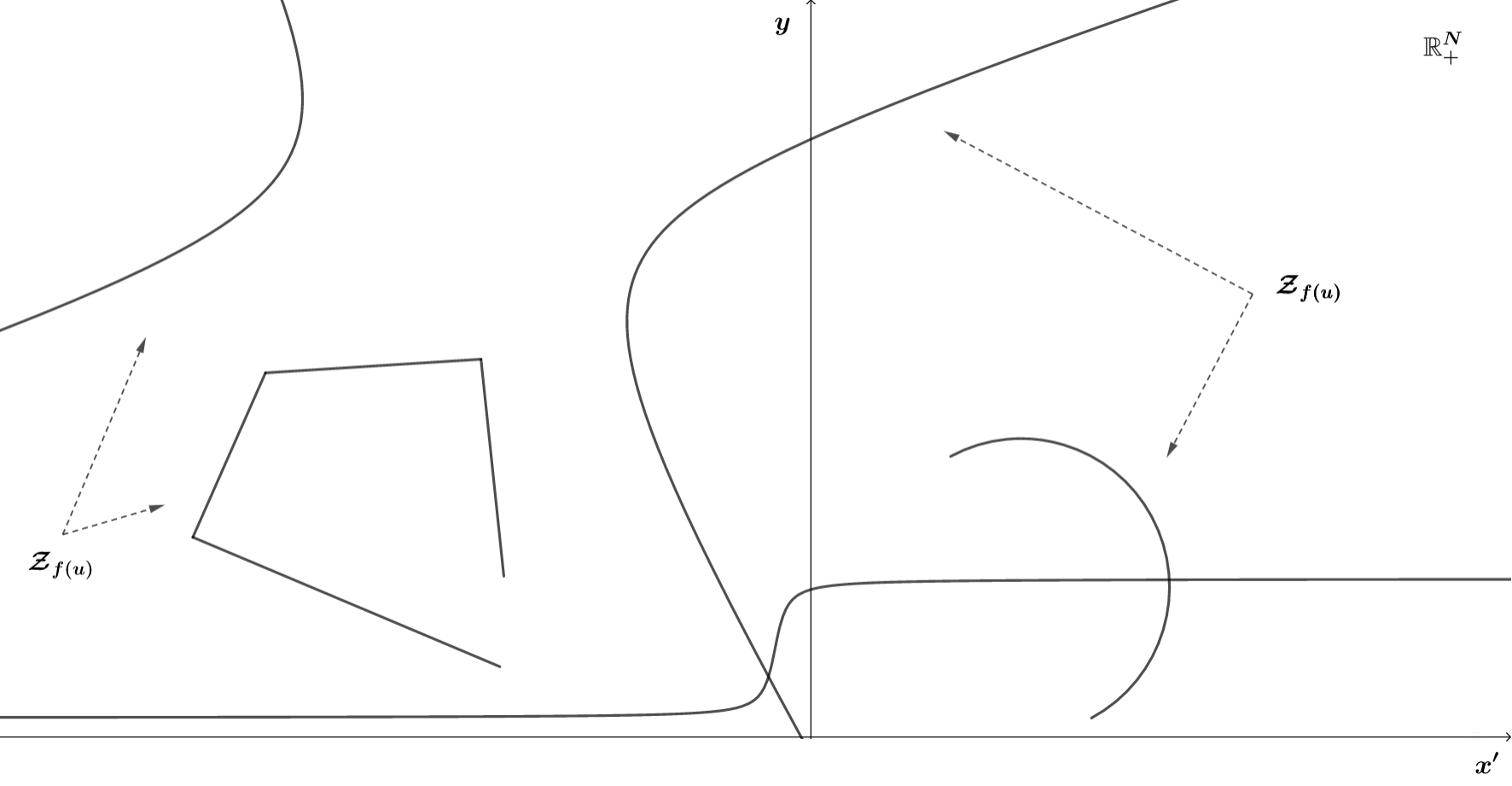}\\
	\caption{The  set $\mathcal{Z}_{f(u)}$ \label{fg:wild}}
\end{figure}

The proof of Theorem \ref{thm:halfspace} is based on a nontrivial modification of the moving plane method. Theorem \ref{thm:halfspace} will be proved at the end of this section using several preliminary results.

\begin{lem}\label{lem:utile}
	Let $\mathcal U$ a connected component of $\mathbb R^N_+ \setminus \mathcal{Z}_{f(u)}$ and let us assume that $\partial_y u\geq 0$ in $\mathcal U$.
	Then
	\[\partial_y u> 0\quad \text{in}\,\,\mathcal U.\]
\end{lem}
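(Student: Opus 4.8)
The plan is to combine the strong maximum principle for the linearized operator (Theorem \ref{SMPlinearized}) with a connectedness argument, and then to rule out the trivial alternative $\partial_y u\equiv 0$ by exploiting the boundary datum together with the very definition of $\mathcal U$ as a connected component of $\R^N_+\setminus\mathcal{Z}_{f(u)}$.

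First I would observe that on $\mathcal U$ the sign of $f(u)$ is constant. Indeed, $x\in\mathcal U$ means $x\notin\mathcal{Z}_{f(u)}$, i.e. $u(x)\notin\mathcal N_f$, so $f(u)\neq 0$ on $\mathcal U$; since $f\circ u$ is continuous and $\mathcal U$ is connected, either $f(u)>0$ in $\mathcal U$ or $f(u)<0$ in $\mathcal U$. I will assume the first case, the second being identical upon using \eqref{fassnegbis} instead of \eqref{fassposbis}. Recall also that, by the translation invariance of \eqref{equation1} in the $y$-direction together with the regularity $|\nabla u|^{p-2}\nabla u\in W^{1,2}_{loc}$ quoted above, $\partial_y u$ solves the linearized equation, so we are precisely in the setting of Theorem \ref{SMPlinearized}.

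Since $\mathcal U$ need not be bounded nor smooth, I would apply Theorem \ref{SMPlinearized} only on balls: for every ball $B$ with $\overline B\subset\mathcal U$ we have $f(u)>0$ on $\overline B$ and $\partial_y u\geq0$ in $B$, hence either $\partial_y u\equiv0$ in $B$ or $\partial_y u>0$ in $B$. Now set $\mathcal U_+:=\{x\in\mathcal U:\partial_y u(x)>0\}$ and $\mathcal U_0:=\{x\in\mathcal U:\partial_y u\equiv0 \text{ in a neighbourhood of } x\}$. Both sets are open; moreover, if $\partial_y u(x)=0$ then the dichotomy applied on a small ball about $x$ (contained in $\mathcal U$) forces $\partial_y u\equiv0$ there, so $x\in\mathcal U_0$. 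Thus $\mathcal U=\mathcal U_+\cup\mathcal U_0$ with $\mathcal U_+\cap\mathcal U_0=\emptyset$, and by connectedness of $\mathcal U$ either $\mathcal U=\mathcal U_+$, which is the claim, or $\mathcal U=\mathcal U_0$, i.e. $\partial_y u\equiv0$ in $\mathcal U$.

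It remains to exclude $\partial_y u\equiv0$ in $\mathcal U$, and I expect this to be the delicate point, since it is where the geometry of the half-space and the possibly wild shape of $\mathcal{Z}_{f(u)}$ come into play. Fix $x_0=(x_0',y_0)\in\mathcal U$ and let $(a,b)$ be the connected component of $\{y>0:(x_0',y)\in\mathcal U\}$ containing $y_0$, so $0\leq a<y_0<b\leq+\infty$. As $\partial_y u\equiv0$ on the segment $\{x_0'\}\times(a,b)\subset\mathcal U$, the map $y\mapsto u(x_0',y)$ is constant on $(a,b)$, equal to $c:=u(x_0)$. If $a=0$, letting $y\to0^+$ and using $u\in C^1_{loc}(\overline{\R^N_+})$ together with the boundary condition in \eqref{equation1} gives $c=u(x_0',0)=0$, contradicting $u(x_0)>0$. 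If instead $a>0$, then $(x_0',a)\in\R^N_+$ and $(x_0',a)\notin\mathcal U$, yet $(x_0',a)$ is a limit of points of $\mathcal U$; since the connected components of the open set $\R^N_+\setminus\mathcal{Z}_{f(u)}$ are open, this forces $(x_0',a)\in\mathcal{Z}_{f(u)}$, i.e. $u(x_0',a)\in\mathcal N_f$, whereas by continuity $u(x_0',a)=c=u(x_0)\notin\mathcal N_f$, a contradiction. Hence $\mathcal U=\mathcal U_+$, that is $\partial_y u>0$ in $\mathcal U$, which proves the lemma.
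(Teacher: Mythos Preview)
Your proof is correct and follows essentially the same route as the paper's: apply the strong maximum principle for the linearized operator to obtain the dichotomy $\partial_y u>0$ or $\partial_y u\equiv 0$ on $\mathcal U$, then rule out the latter by following the vertical segment from a point of $\mathcal U$ downward until it either reaches $\partial\R^N_+$ (forcing $u=0$, a contradiction) or first exits $\mathcal U$ at a point of $\mathcal{Z}_{f(u)}$ (forcing $u(x_0)\in\mathcal N_f$, again a contradiction). Your localization of Theorem~\ref{SMPlinearized} to balls with closure in $\mathcal U$, followed by the open--open decomposition $\mathcal U=\mathcal U_+\cup\mathcal U_0$, is slightly more explicit than the paper, which invokes the dichotomy directly on $\mathcal U$.
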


\begin{proof}
	Using Theorem \ref{SMPlinearized} we deduce that either $\partial_y u> 0$ in $\mathcal U$ or $\partial_y u\equiv 0$ in $\mathcal U$. By contradiction, let us assume that $\partial_y u \equiv 0$ in $\mathcal U$.  Take $P_0=(x'_0,y_0) \in \mathcal U$ and let us define
	\[s(t)=P_0-t y_0 \cdot \mathbf{e}_N, \quad t\in [0,1]\]
	and 
	\begin{equation}\label{eq:inffff}
	t_0=\sup \Big\{t\in[0,1]\,:\, s(\vartheta) \in \mathcal U,  \; \forall \vartheta \in [0,t)\Big\}.
	\end{equation}
	We note that the supremum in \eqref{eq:inffff} is well defined, since by definition the connected component $\mathcal U$ is an open set, and that $t_0 \in (0,1]$. 
	
	In the case $t_0=1$, we deduce that $u(P_0)=0$. Indeed, $u$ is constant on $s(t)$ for $t\in [0,1)$  (recall that  $\partial_y u \equiv 0$ in $\mathcal U$), $u=0$ on $\partial \R^N_+$. But this gives a contradiction with our assumption $u(x',y)>0$ in $\R^N_+$.
	
	In the case $t_0<1$, we deduce that $s(t_0)\in \mathcal{Z}_{f(u)}$ and therefore $f(u(s(t_0)))=f(u(P_0-t_0y))=0$. But $u$ is constant on $s(t)$ for $0 \leq t \leq t_0$, which implies that $f(u(P_0))=f(u(P_0-t_0 y))=0$, namely $P_0 \in \mathcal{Z}_{f(u)}$. The latter clearly contradicts the assumption $P_0 \in \mathcal U$. Therefore $\partial_y u> 0$ in $\mathcal U$ as desired.
	
\end{proof}	

	Some arguments used in the proofs of the results of this section, are based on a
	nontrivial modification of the moving plane method. Let us recall some notations. We
	define the unbounded strip $\Sigma_\lambda$ and the hyperplane
	$T_\lambda$ by
	\begin{equation} \label{eq:acquaesapone} \begin{split}
	\Sigma_\lambda := \{x \in \R^N_+ \ | \ 0<x_N < \lambda\},\ \quad  \quad
	T_\lambda &:= \partial \Sigma_\lambda = \{x \in \R^N_+ \ | \ x_N = \lambda\}
	\end{split} \end{equation}
	and the reflected function $u_\lambda(x)$ by
	$$u_\lambda (x) = u_\lambda (x',x_N):= u(x', 2\lambda-x_N) \quad \text{in}\,\, \mathbb R^N_+.$$
	We also define the critical set $\mathcal{Z}_{\nabla u}$ by
	\begin{equation}\label{eq:criticalset}
	\mathcal{Z}_{\nabla u}:=\{x \in \R^N_+ \ | \ \nabla u(x) = 0 \}.
	\end{equation}
	The first step in the proof of the monotonicity is to get a property concerning the local
	symmetry regions of the solution, namely any $\mathcal{C} \subseteq \Sigma_\lambda$ such that $u \equiv u_\lambda$ in $\mathcal{C}$.

	Having in mind these notations we are able to prove the following:

	\begin{prop}\label{connectedness}
		Under the assumption of Theorem \ref{thm:halfspace}, let us assume that $u$ is a solution to  \eqref{equation1}, such that
		\begin{itemize}
			\item[(i)] $u$ is monotone non-decreasing in $\Sigma_\lambda$
			
			\
			
			\text{and}
			
			\
			
			\item [(ii)] $u \leq u_\lambda$ in $\Sigma_\lambda$.
		\end{itemize}
		Then $u < u_\lambda$ in $\Sigma_\lambda \setminus \mathcal{Z}_{f(u)}$.
	\end{prop}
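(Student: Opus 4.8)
The plan is to localize the strong comparison principle along the connected components of $\Sigma_\lambda\setminus\mathcal{Z}_{f(u)}$ and then to rule out the ``local symmetry'' alternative by a sliding argument based on the monotonicity assumption (i), the strict positivity of $u$ and the homogeneous boundary datum. First I would record the basic set-up. Since $f$ depends on $u$ only and $-\Delta_p$ commutes with the reflection $(x',x_N)\mapsto(x',2\lambda-x_N)$, the reflected function $u_\lambda$ is again a weak solution of $-\Delta_p w=f(w)$ in $\Sigma_\lambda$ (here one uses that $u$ solves the equation in $\R^{N-1}\times(\lambda,2\lambda)\subset\R^N_+$), and $u\le u_\lambda$ in $\Sigma_\lambda$ by (ii). Fix a connected component $\mathcal{A}$ of the open set $\Sigma_\lambda\setminus\mathcal{Z}_{f(u)}$: on $\mathcal{A}$ we have $u(x)\notin\mathcal{N}_f$, hence $f(u)\neq 0$, and since $f\circ u$ is continuous and $\mathcal{A}$ is connected, $f(u)$ keeps a strict sign on $\mathcal{A}$, hence on every subdomain compactly contained in it.

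Next I would establish the dichotomy: either $u<u_\lambda$ in $\mathcal{A}$, or $u\equiv u_\lambda$ in $\mathcal{A}$. Indeed, $E:=\{x\in\mathcal{A}: u(x)=u_\lambda(x)\}$ is relatively closed in $\mathcal{A}$ by continuity, and it is also relatively open: given $x_0\in E$, pick a ball $B\Subset\mathcal{A}$ centered at $x_0$; on $\overline{B}$ the function $f(u)$ has a strict sign, so hypothesis $(f_u)$ of Theorem \ref{SCPLucioeDino} holds with $v:=u_\lambda$, while (ii) gives $u\le u_\lambda$ in $B$ with equality at the interior point $x_0$; Theorem \ref{SCPLucioeDino} then forces $u\equiv u_\lambda$ in $B$, i.e.\ $B\subseteq E$. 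Connectedness of $\mathcal{A}$ gives $E=\emptyset$ (hence $u<u_\lambda$ in $\mathcal{A}$, which is what we want) or $E=\mathcal{A}$.

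It remains to exclude $u\equiv u_\lambda$ in $\mathcal{A}$, which is the heart of the proof. Assume it holds. Because $f(u)$ has a fixed sign on $\mathcal{A}$ and $\partial_y u\ge 0$ there by (i), Theorem \ref{SMPlinearized} (applied on balls $B\Subset\mathcal{A}$, together with a connectedness argument) yields $\partial_y u\equiv 0$ in $\mathcal{A}$ or $\partial_y u>0$ in $\mathcal{A}$; the first possibility is excluded exactly as in Lemma \ref{lem:utile}, by sliding a vertical segment downwards inside $\mathcal{A}$ from a point $P_0=(x_0',y_0)\in\mathcal{A}$ --- if it reaches $\partial\R^N_+$ without leaving $\mathcal{A}$, then $u$ is constantly $0$ along it, contradicting $u>0$; if it first exits $\mathcal{A}$ at $(x_0',y_*)\in\mathcal{Z}_{f(u)}$ with $y_*>0$, then $u$ is constant on $\{x_0'\}\times(y_*,y_0]$, so $u(P_0)=u(x_0',y_*)\in\mathcal{N}_f$ and $P_0\in\mathcal{Z}_{f(u)}$, a contradiction. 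Hence $\partial_y u>0$ in $\mathcal{A}$. Now, using the symmetry identity $u(x',x_N)=u(x',2\lambda-x_N)$ valid on $\mathcal{A}$, I would slide a vertical segment downwards from $P_0\in\mathcal{A}$ once more: if it stays in $\mathcal{A}$ all the way down to $\partial\R^N_+$, then letting $x_N\to 0^+$ in the identity gives $u(x_0',2\lambda)=u(x_0',0)=0$, contradicting positivity; if instead it exits $\mathcal{A}$ at $(x_0',y_*)\in\mathcal{Z}_{f(u)}$ with $y_*>0$, one has to propagate the local symmetry across $\mathcal{Z}_{f(u)}$ --- relocating the degeneracy along the reflected segment and exploiting the strict monotonicity just obtained together with the fact that $f(u)$ stays of one sign between two consecutive zeros of $f$ --- until the previous favourable configuration is reached. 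This last propagation step, to be carried out in the presence of the possibly very irregular set $\mathcal{Z}_{f(u)}$ (and where the a priori bound $0\le u(x',t)\le\|\nabla u\|_{L^\infty(\Sigma_\lambda)}\,t$ near the bottom of the strip also enters), is the step I expect to be the main obstacle; the rest is a routine use of the comparison and maximum-principle tools recalled above. Once the alternative $u\equiv u_\lambda$ is excluded on every component $\mathcal{A}$, we conclude $u<u_\lambda$ in $\Sigma_\lambda\setminus\mathcal{Z}_{f(u)}$.
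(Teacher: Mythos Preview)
Your overall architecture is correct and matches the paper: reduce to a connected component $\mathcal{A}$ of $\Sigma_\lambda\setminus\mathcal{Z}_{f(u)}$, use Theorem~\ref{SCPLucioeDino} to get the dichotomy $u<u_\lambda$ or $u\equiv u_\lambda$ on $\mathcal{A}$, and then rule out the symmetry alternative by pushing the symmetry region down to $\partial\R^N_+$. The difficulty you flag as ``the main obstacle'' is indeed the heart of the argument, and your sketch of that step is a genuine gap: you have not identified the mechanism that lets one propagate $u\equiv u_\lambda$ \emph{across} the set $\mathcal{Z}_{f(u)}$, where Theorem~\ref{SCPLucioeDino} is unavailable.

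Here is what is missing. Instead of sliding a vertical \emph{segment}, slide a \emph{ball} $B_{\rho_0}\subset\mathcal{A}$ downward in the $y$-direction until it first touches $\partial\mathcal{A}$ at some $z_0\in\mathcal{Z}_{f(u)}\cap\R^N_+$. The point of using a ball is that it provides the interior sphere condition needed for Hopf's lemma: on the slided ball the function $w_0:=u-u(z_0)$ (or $-w_0$) is positive, and since $p<2$ and $f$ is locally Lipschitz one has $\Delta_p w_0\le C\,w_0^{\,p-1}$ there, so the Hopf boundary lemma for the $p$-Laplacian yields $|\nabla u(z_0)|\neq 0$. This is the key: near $z_0$ the operator is uniformly elliptic, so the \emph{classical} strong comparison principle (which requires no sign condition on $f$, only $|\nabla u|+|\nabla u_\lambda|>0$) gives $u\equiv u_\lambda$ in a full ball $B_r(z_0)$, thereby crossing $\mathcal{Z}_{f(u)}$. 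One then checks, via the strong maximum principle for the linearized operator in $B_r(z_0)$, that $\partial_y u(z_0)>0$ (the case $\partial_y u(z_0)=0$ would force $u$ to be constant along a short vertical segment above $z_0$ lying in $\mathcal{A}$, contradicting $\mathcal{A}\cap\mathcal{Z}_{f(u)}=\emptyset$); hence $\{u=u(z_0)\}$ is locally a graph in $y$, and $B_r(z_0)$ meets the next connected component below, on which $u\equiv u_\lambda$ again by Theorem~\ref{SCPLucioeDino}. Because $\mathcal{N}_f$ is discrete and $u(z_k)$ strictly decreases in $\mathcal{N}_f$ at each step, finitely many iterations bring you to a contact point $z_m$ with $u(z_m)=\min(\mathcal{N}_f\setminus\{0\})$; below it one builds a connected ``deformed cylinder'' reaching $\partial\R^N_+$ on which $u\equiv u_\lambda$, contradicting $u(x',0)=0<u_\lambda(x',0)$. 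Your proposed use of the bound $u(x',t)\le\|\nabla u\|_{L^\infty}\,t$ and of the ``reflected segment'' is not needed and does not substitute for the Hopf step above.
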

	
	\begin{proof}
		 Arguing by contradiction, let us assume that there exists $P_0
		=(x_0',y_0) \in \Sigma_\lambda \setminus \mathcal{Z}_{f(u)}$ such
		that $u(P_0)=u_\lambda(P_0)$. Let $\mathcal{U}_0$ the connected component of $\Sigma_\lambda \setminus \mathcal{Z}_{f(u)}$ containing $P_0$. By Theorem \ref{SCPLucioeDino}, since $u(P_0)=u_\lambda(P_0)$, we deduce that $\mathcal U_0$ is a local symmetry region, i.e. $u\equiv u_\lambda$ in $\mathcal{U}_0$.

        Since $\mathcal{U}_0$ is an open set of $\Sigma_\lambda \setminus \mathcal{Z}_{f(u)}$ (and also of $\R^N_+$) there exists $\rho_0=\rho_0(P_0)>0$ such that
		\begin{equation}\label{eq:pallaprimopunto}
		B_{\rho_0}(P_0) \subset \mathcal{U}_0.
		\end{equation}

		\begin{figure}[htbp]
			\centering
			\includegraphics[scale=.37]{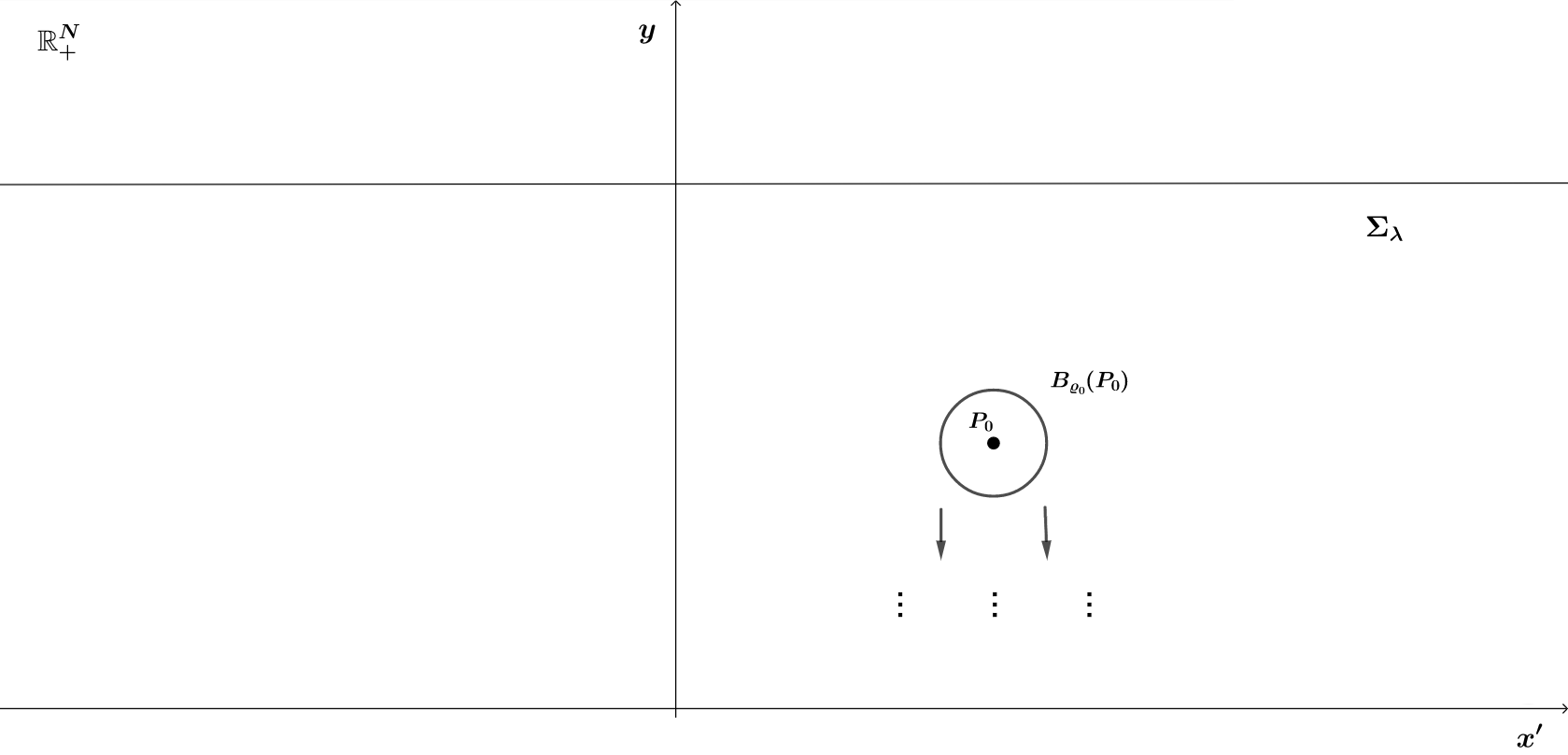}\\
			\caption{The slided ball $B_{\rho_0}(P_0)$ \label{fg:slided}}
		\end{figure}

		We can slide $B_{\rho_0}$ in $\mathcal U_0$, towards to $\partial \R^N_+$ in the $y$-direction and keeping its centre on the line $\{x'=x'_0\}$ (see Figure \ref{fg:slided}), until it touches  for the first time  $\partial \mathcal{U}_0$ at some point $z_0\in  \mathcal{Z}_{f(u)} \cap \R^N_+$. We observe that $z_0 \not \in \partial \R^N_+$, since otherwise $u \equiv u_\lambda \equiv 0$ contradicting the fact that $u$ is a positive solution. In Figure \ref{fg:1contact}, we show some possible examples of {\em first contact point} with the set $\mathcal{Z}_{f(u)}$.

		\begin{figure}[htbp]
			\centering
			\includegraphics[scale=.4]{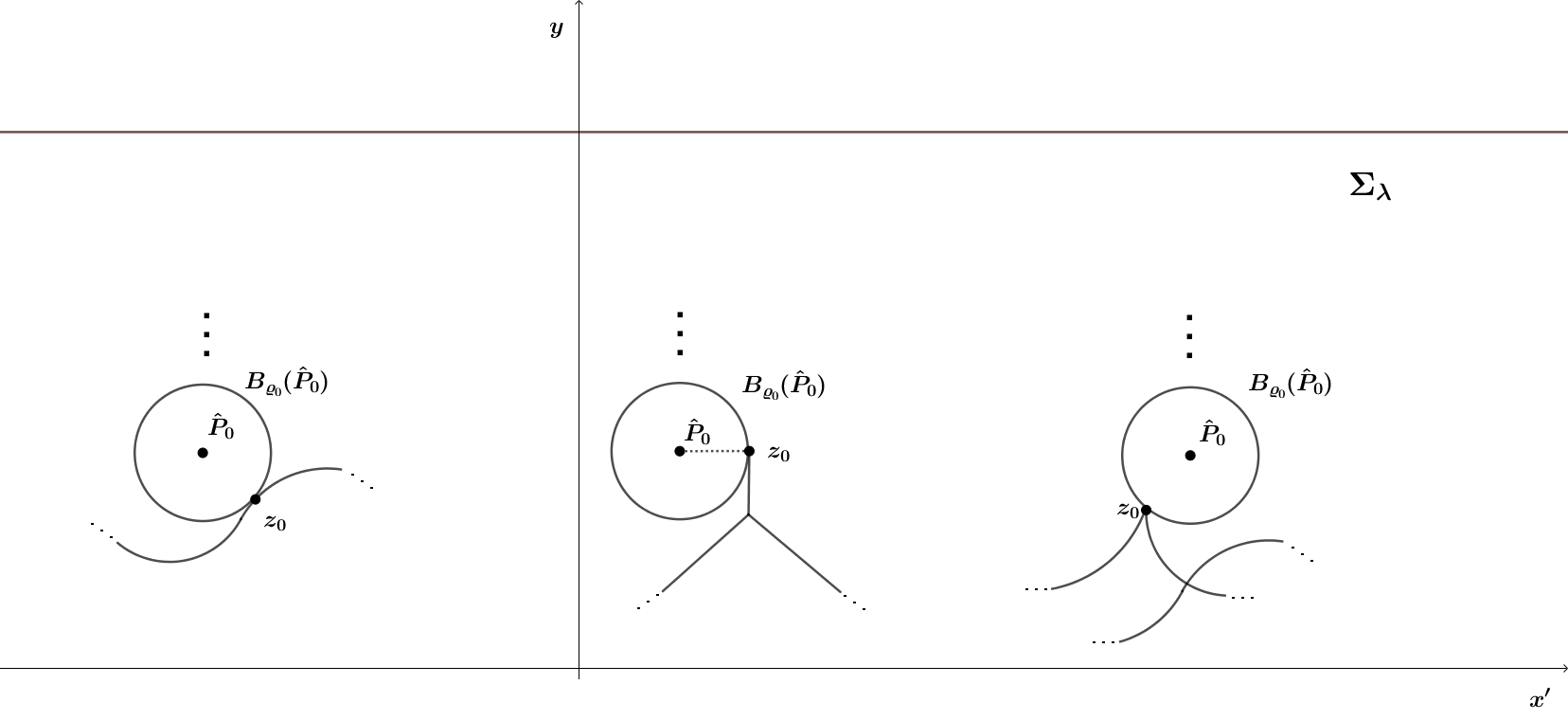}\\
			\caption{The first contact point $z_0$ \label{fg:1contact}}
		\end{figure}

		Now we consider the function
		$$w_0(x):=u(x)-u(z_0)$$
		and we observe that $w_0(x) \neq 0$ for every $x \in B_{\rho_0}(\hat P_0)$, where $\hat P_0$ is the new centre of the slided ball. In fact,  if
		this is not the case there would exist a point $\bar z \in  B_{\rho_0}(\hat P_0)$
		such that $w_0(\bar z)=0$, but this is in contradiction with the
		fact that $\mathcal{U}_0 \cap \mathcal{Z}_{f(u)}= \emptyset$.
		We
		have to distinguish two
		cases. Since $p < 2$ and $f$ is locally Lipschitz, we have that

		{\textbf{Case 1:} If $w_0(x) > 0$ in $B_{\rho_0}(\hat P_0)$, then
			\begin{equation}\nonumber
			\begin{cases}
			\Delta_p w_0 \leq C w_0^{p-1} & \quad \text{in} \ B_{\rho_0}(\hat P_0)\\ 
			w_0>0 & \quad \text{in} \ B_{\rho_0}(\hat P_0)\\
			w(z_0)=0 & \quad z_0 \in \partial B_{\rho_0}(\hat P_0),
			\end{cases}
			\end{equation}
			where $C$ is a positive constant.}
		
		{\textbf{Case 2:} If $w_0(x) < 0$ in $B_{\rho_0}(\hat P_0)$, setting $v_0 = - w_0$ we have
			\begin{equation}\nonumber
			\begin{cases}
			\Delta_p v_0 \leq C v_0^{p-1} & \quad \text{in} \ B_{\rho_0}(\hat P_0)\\ 
			v_0>0 & \quad \text{in} \ B_{\rho_0}(\hat P_0)
			\\
			v_0(z_0)=0 & \quad z_0 \in \partial B_{\rho_0}(\hat P_0),
			\end{cases}
			\end{equation}
			where $C$ is a positive constant.}
		
		In both cases, by the Hopf boundary lemma (see e.g. \cite{pucser, strongpucser, Vaz}), it
		follows that $|\nabla w(z_0)|=|\nabla u (z_0)| \neq 0$.

		Using the Implicit Function Theorem we deduce that the set $\{u=u(z_0)\}$
		is a  smooth manifold near $z_0$. Now we want to prove that
		\[
		\partial_y u (z_0) > 0
		\] 
		and actually that the set $\{u=u(z_0)\}$ is a graph in the $y$-direction near the point $z_0$. By our assumption we know that $\partial_y u (z_0):=u_y(z_0) \geq 0$.
		According to \cite{DSCalcVar, DSJDE} and \eqref{eq:linearizzatoSCP}, the linearized operator of \eqref{equation1} is well defined
		\begin{equation}\label{linearized}
		\begin{split}
		L_u(u_y, \varphi) \equiv & \int_{\Sigma_\lambda} |\nabla u|^{p-2} (\nabla u_y, \nabla \varphi) \, dx + (p-2) \int_{\Sigma_\lambda} |\nabla u|^{p-4} (\nabla u, \nabla u_y)(\nabla u, \nabla \varphi) \, dx\\
		&- \int_{\Sigma_\lambda} f'(u)u_y \varphi \, dx .
		\end{split}
		\end{equation}
		for every $\varphi \in C^1_c(\Sigma_\lambda)$. Moreover $u_y$  satisfies the
		linearized equation \eqref{linearizedequation}, i.e.
		\begin{equation}\label{linearizedEq}
		L_u(u_y, \varphi) =0 \quad \forall \varphi \in C^1_c(\Sigma_\lambda).
		\end{equation}
		Let us set $z_0=(z_0',y_0) \in \R^N_+$.  
		
		We have two possibilities: $u_y(z_0)=0$ or $u_y(z_0)>0$.

		{\textit{Claim:} We show that the case $u_y(z_0)=0$ is not possible. }

	If $u_y(z_0)=0$, then 
		\begin{equation} \label{eq:equiv0rho}
			u_y(x) = 0 \qquad \text{for every } x \in B_{\hat \rho}(z_0),
		\end{equation} 
	    for some positive ${\hat \rho}$; to prove this we use the fact that $|\nabla u(z_0)|\neq 0$, $u$ belongs to $\mathcal{C}^{1}$ and that it holds the classical strong maximum principle for the linearized operator (see e.g. Theorem 2.5 in \cite{EFMS} or \cite{pucser}).

		By construction there exists $0 < \varepsilon_1 < \hat{\rho}$ such that every point $z \in \mathcal{S}_1 := \{(z'_0,t) \in \mathcal{U}_0 : y_0  < t < y_0 + \varepsilon_1 \}$ has the following properties:
			\begin{enumerate}
			\item $z \in \overline{\mathcal{U}_0}$, since the ball is sliding along the segment $\mathcal{S}_1$;
				
			\item $z \not \in \partial \mathcal{U}_0$, since $z_0$ is the first contact point with $\partial \mathcal{U}_0$.

			\end{enumerate}
			That is, we have
			\begin{equation}\label{sliding}
			\mathcal{S}_1 \subset \overline{\mathcal{U}_0} \setminus \partial \mathcal{U}_0 = \mathcal{U}_0.
			\end{equation}
    		By construction we have that $ \overline{\mathcal{S}_1} \subset B_{\hat \rho}(z_0)$ and by \eqref{eq:equiv0rho} 
			$$u_y(z)=0 \quad \forall z \in \overline{\mathcal{S}_1}.$$
			Hence, we deduce that $u(z)=u(z_0)$ for every $z \in  \overline{\mathcal{S}_1}$; since $z_0 \in \mathcal{Z}_{f(u)}$ by our assumptions, we reach a contradiction with \eqref{sliding}.

		From what we have seen above,  we have $|\nabla u (z_0)| \neq 0$ and
		hence there exists a ball $B_r(z_0)$ where $|\nabla u (x)| \neq 0$
		for every $x \in B_r(z_0)$. By the classical strong comparison principle (see e.g. Theorem 2.3 in \cite{EFMS} or Theorem 1.4 in \cite{Damascelli} or \cite{pucser}) it follows that $u \equiv u_\lambda$ in
		$B_r(z_0)$ namely $u \equiv u_\lambda$ in a neighborhood of the point $z_0\in \partial \mathcal U_0$. Since $u_y (z_0)>0$ and $\mathcal{N}_f$ is discrete
		\[B_r(z_0)\cap \Big((\Sigma_\lambda \setminus \mathcal{Z}_{f(u)})\setminus \mathcal{U}_0\Big)\neq \emptyset\]
		and $u_y(x) >0$ in $B_r(z_0)$, as consequence,   the set $\{u=u(z_0)\}$ is a graph in the $y$-direction in a neighborhood  of  the point $z_0$.

		Now we have to distinguish two cases:
		
		\
		
		\textbf{Case 1:}  $u(z_0)=\min \Big [\mathcal{N}_f\setminus\{0\}\Big].$
			
		\noindent Define the sets 
		\[ \mathcal{C}_1:=\Big\{x=(x',y)\in \mathbb R^N_+ \,:\,  x'\in \Pi_0\left( B_r(z_0)\right) \quad \text{and}\quad u(x)<u(z_0)\Big \} \]
		\[ \mathcal{C}_2:= B_r(z_0) \cup \big( \Pi_0\left( B_r(z_0)\right) \times (0,y_0)\big),\]
		where $\Pi_0:\R^N \rightarrow \R^{N-1}$ is the canonical projection on the hyperplane $\{y=0\}$	and 
		\[\mathcal{C}=\mathcal{C}_1\cap \mathcal{C}_2.\]
		We observe that $\mathcal{C}$ is an open path-connected set (actually a deformed cylinder), see Figure \ref{fg:3lastcomponent}. Since $f(u(z_0))$ has the right sign, by Theorem \ref{SCPLucioeDino} it follows that $u\equiv u_\lambda$ in $\mathcal C$ and this is in  contradiction with the fact that $u(x',0)=0$ on $\partial \R^N_+$, while $u_\lambda(x',0)>0$.
		\begin{figure}[htbp]
			\centering
			\includegraphics[scale=.45]{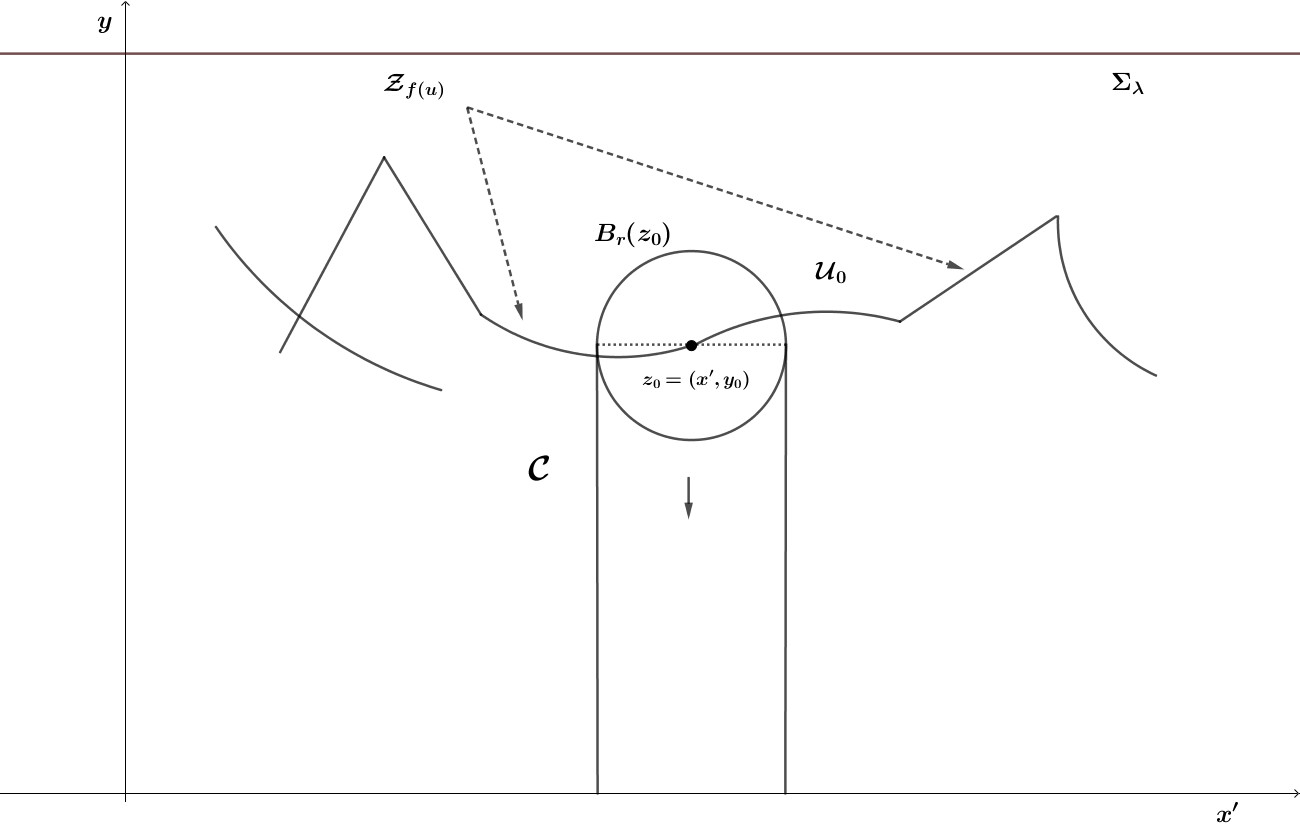}\\
			\caption{Case 1:  $u(z_0)=\min \Big [\mathcal{N}_f\setminus\{0\}\Big]$ \label{fg:3lastcomponent}}
		\end{figure}
			
		\ 
		
		\textbf{Case 2:} $u(z_0)>\min \Big [\mathcal{N}_f\setminus\{0\}\Big]$.
			
		\noindent In this case the open ball $B_r(z_0)$ must intersect another connected component  (i.e. $\not\equiv \mathcal{U}_0 $) of $\Sigma_\lambda \setminus \mathcal{Z}_{f(u)}$, such that  $u \equiv u_\lambda$ in  a such component, see Figure \ref{fg:3morecomponents}. Here we used the fact that near the (new) first contact point $z_1$, the corresponding level set is a graph in the $y$-direction and $u(z_1)<u(z_0)$, since $\mathcal{N}_f$ is a discrete set. Now, it is clear that repeating a finite number of times the argument leading to the existence of the touching point $z_0$, we can find a touching point $z_m$ such that \[u(z_m)=\min \Big [\mathcal{N}_f\setminus\{0\}\Big].\] The contradiction then follows exactly as in Case 1.
		\begin{figure}[htbp]
			\centering
			\includegraphics[scale=.25]{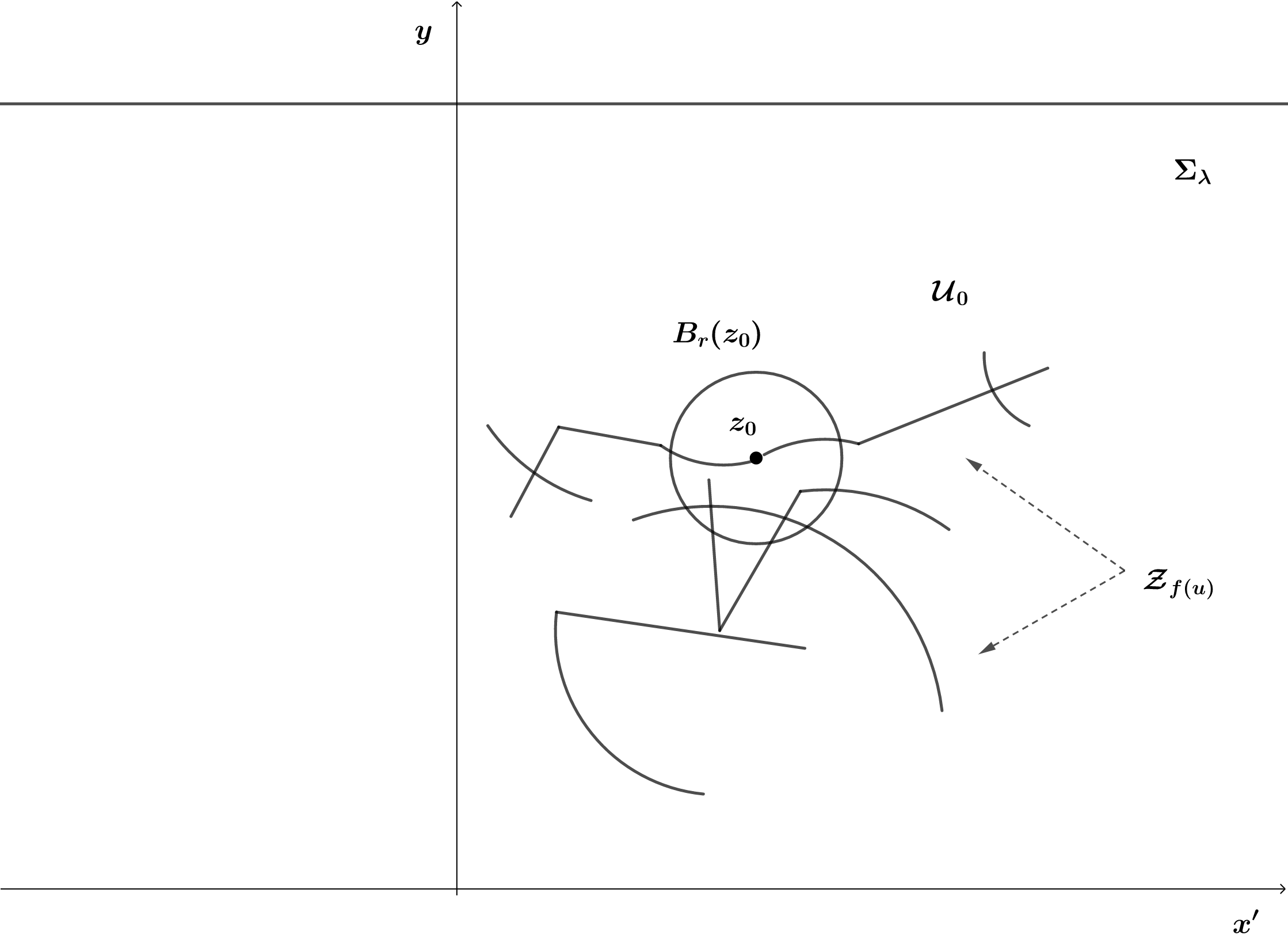}\\
			\caption{Case 2:  $u(z_0)>\min \Big [\mathcal{N}_f\setminus\{0\}\Big]$ \label{fg:3morecomponents}}
		\end{figure}

		Hence $u < u_\lambda$ in $\Sigma_\lambda \setminus
		\mathcal{Z}_{f(u)}$.
		
	\end{proof}

	With the notation introduced above, we set
	\begin{equation}\label{eq:miamnamgnam}
	\Lambda := \{\lambda \in \R \ | \ u \leq u_t \; \text{in} \ \Sigma_t, \; \forall t < \lambda\}.
	\end{equation}
	
	Note that, by Theorem \ref{compprinciplenarrow} (with $v=u_t)$, it follows
	that $\Lambda \neq \emptyset$, hence we can define
	\begin{equation}\label{eq:suplambda}
	\bar \lambda := \sup \Lambda.\end{equation}
	The proof of the fact that  $u(x',y)$ is monotone increasing in the $y$-direction in the half-space $\mathbb R^N_+$ is done once we show that $\bar	\lambda = + \infty$. To this end we assume by contradiction that $0 < \bar \lambda < + \infty$, and we prove a crucial result,
	which allows us to localize the support of $(u-u_{\bar \lambda})^+$.
	This localization, that we are going to obtain, will be useful to
	apply Theorem \ref{compprinciplenarrow}.
	
	\begin{prop}\label{localizationSupport}
		Under the same assumption of Theorem \ref{thm:halfspace}, let $u$ be a solution to \eqref{equation1}. Assume $0 < \bar	\lambda < + \infty$ \emph{(see \eqref{eq:suplambda})} and set
		\begin{equation}\nonumber
		W_\varepsilon:=(u-u_{\bar \lambda + \varepsilon}) \cdot \chi_{\{y \leq \bar \lambda + \varepsilon\}}
		\end{equation}\label{WepsilonBis}
        where $\varepsilon > 0$.
        
		Given $0 < \delta < \frac{\bar \lambda}{2}$ and $\kappa > 0$, there exists $\bar \varepsilon > 0$ such that for every $0 < \varepsilon < \bar \varepsilon$
		\begin{equation}\label{supportWepsilon}
		\text{\emph{supp}}\, W^+_\varepsilon \subset \{0 \leq y \leq \delta\} \cup
		\{ \bar \lambda - \delta \leq y \leq \bar \lambda + \varepsilon \} \cup
		\left( \bigcup_{x' \in \R^{N-1}} B_{x'}^{\kappa} \right),
		\end{equation}
	where $B_{x'}^{\kappa}$ is such that
	$$B^{\kappa}_{x'} \subseteq \{y \in (0,\bar \lambda + \varepsilon) \ : \ |\nabla u(x',y)|<\kappa, \; |\nabla u_{\bar \lambda + \varepsilon}(x',y)|< \kappa\}.$$
	\end{prop}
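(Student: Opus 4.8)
The plan is to argue by contradiction through a translation/compactness reduction, landing on a configuration that is incompatible with Proposition \ref{connectedness} and with the sliding‑ball scheme in its proof.

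\emph{Set‑up.} First I would record the consequences of the definition of $\bar\lambda$: one has $\bar\lambda\in\Lambda$ (given $t<\bar\lambda$, pick $\lambda\in\Lambda$ with $t<\lambda$), so $u\le u_t$ in $\Sigma_t$ for all $t\le\bar\lambda$, the endpoint $t=\bar\lambda$ by continuity; writing $x_N=2\lambda-x_N'$ with $\lambda=\tfrac12(x_N+x_N')$ this gives that $u$ is monotone non‑decreasing in $\Sigma_{\bar\lambda}$ and $u\le u_{\bar\lambda}$ there. I would also fix the sets $B^\kappa_{x'}$ to be closed, e.g.\ $B^\kappa_{x'}:=\{y\in[0,\bar\lambda+\varepsilon]:|\nabla u(x',y)|\le\kappa,\ |\nabla u_{\bar\lambda+\varepsilon}(x',y)|\le\kappa\}$, so that the right‑hand side of \eqref{supportWepsilon} is closed. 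Now assume the statement fails for some $\delta\in(0,\bar\lambda/2)$, $\kappa>0$: then there are $\varepsilon_n\downarrow0$ and $P_n=(x_n',y_n)$ with $W^+_{\varepsilon_n}(P_n)>0$ (so $u(P_n)>u_{\bar\lambda+\varepsilon_n}(P_n)$ and $y_n<\bar\lambda+\varepsilon_n$), with $\delta<y_n<\bar\lambda-\delta$, and $P_n\notin\bigcup_{x'}B^\kappa_{x'}$, i.e.\ $|\nabla u(P_n)|>\kappa$ or $|\nabla u_{\bar\lambda+\varepsilon_n}(P_n)|>\kappa$.

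\emph{Compactness.} I would translate horizontally, $u_n:=u(\cdot+(x_n',0))$. Each $u_n$ solves \eqref{equation1}, vanishes on $\{y=0\}$, is monotone non‑decreasing in $\Sigma_{\bar\lambda}$, satisfies $u_n\le(u_n)_t$ in $\Sigma_t$ for all $t\le\bar\lambda$, and $\|u_n\|_{L^\infty(\Sigma_R)}+\|\nabla u_n\|_{L^\infty(\Sigma_R)}\le C(R)$ uniformly in $n$ (horizontal translations preserve these norms, and $u_n=0$ on $\{y=0\}$ together with the gradient bound controls $\|u_n\|_{L^\infty(\Sigma_R)}$). By the regularity theory of \cite{DB,T}, up to the flat boundary, $\{u_n\}$ is bounded in $C^{1,\alpha}_{loc}(\overline{\R^N_+})$, so along a subsequence $u_n\to\tilde u$ in $C^1_{loc}(\overline{\R^N_+})$, where $\tilde u$ is a nonnegative weak solution of $-\Delta_p\tilde u=f(\tilde u)$ with $\tilde u(x',0)=0$, $\nabla\tilde u\in L^\infty(\Sigma_\lambda)$ for every $\lambda$, monotone non‑decreasing in $\Sigma_{\bar\lambda}$, and $\tilde u\le\tilde u_t$ in $\Sigma_t$ for all $t\le\bar\lambda$ (each property obtained by passing to the limit). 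Passing to a further subsequence, $y_n\to y_\infty\in[\delta,\bar\lambda-\delta]$; set $P_\infty:=(0,y_\infty)\in\Sigma_{\bar\lambda}$. Taking limits in $u(P_n)\ge u_{\bar\lambda+\varepsilon_n}(P_n)$ gives $\tilde u(P_\infty)\ge\tilde u_{\bar\lambda}(P_\infty)$, hence, with $\tilde u\le\tilde u_{\bar\lambda}$ in $\Sigma_{\bar\lambda}$, $\tilde u(P_\infty)=\tilde u_{\bar\lambda}(P_\infty)$; and taking limits in the gradient bound, at least one of $|\nabla\tilde u(P_\infty)|$, $|\nabla\tilde u_{\bar\lambda}(P_\infty)|$ is $\ge\kappa>0$.

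\emph{Conclusion.} Since one of the two gradients at $P_\infty$ is bounded below, $w:=\tilde u_{\bar\lambda}-\tilde u\ge0$ solves, near $P_\infty$, a linear uniformly elliptic equation with bounded coefficients (the leading part has ellipticity of order $(|\nabla\tilde u|+|\nabla\tilde u_{\bar\lambda}|)^{p-2}$, bounded above and below there because $p<2$ and the gradients are bounded), so by the classical strong maximum principle $w\equiv0$ in a ball $B_r(P_\infty)$; in particular $\nabla\tilde u\equiv\nabla\tilde u_{\bar\lambda}$ on $B_r(P_\infty)$, so $|\nabla\tilde u(P_\infty)|\ge\kappa$, and since $\tilde u\ge0$ this forces $\tilde u(P_\infty)>0$. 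Moreover $\tilde u>0$ throughout $\R^N_+$: near an interior zero $f(\tilde u)\ge-C\tilde u$ (as $f\in C^1$, $f(0)\ge0$), so $\tilde u$ is a nonnegative supersolution of $-\Delta_p v+Cv=0$ and the strong maximum principle \cite{Vaz} gives $\tilde u\equiv0$ or $\tilde u>0$ in the connected set $\R^N_+$, the former being ruled out by $|\nabla\tilde u(P_\infty)|>0$. Thus $\tilde u$ is a nontrivial solution of \eqref{equation1} meeting all the hypotheses of Theorem \ref{thm:halfspace}, so Proposition \ref{connectedness} applies to it with $\lambda=\bar\lambda$ and yields $\tilde u<\tilde u_{\bar\lambda}$ in $\Sigma_{\bar\lambda}\setminus\mathcal Z_{f(\tilde u)}$; combined with $\tilde u(P_\infty)=\tilde u_{\bar\lambda}(P_\infty)$ this forces $P_\infty\in\mathcal Z_{f(\tilde u)}$, i.e.\ $\tilde u(P_\infty)\in\mathcal N_f\setminus\{0\}$. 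At this point $z_0:=P_\infty$ is precisely a ``first contact point'' as in the last part of the proof of Proposition \ref{connectedness}: $|\nabla\tilde u(z_0)|\ne0$, $\tilde u(z_0)\in\mathcal N_f$, $\tilde u\equiv\tilde u_{\bar\lambda}$ near $z_0$, and $\partial_y\tilde u(z_0)>0$ (the case $\partial_y\tilde u(z_0)=0$ being excluded verbatim as in the \emph{Claim} there, via the strong maximum principle for the linearized operator). Running the two‑case argument of that proof without change (Case~1: $\tilde u(z_0)=\min[\mathcal N_f\setminus\{0\}]$; Case~2: descend through finitely many level sets to reduce to Case~1) produces a path‑connected region reaching $\partial\R^N_+$ on which $\tilde u\equiv\tilde u_{\bar\lambda}$, contradicting $\tilde u(x',0)=0<\tilde u(x',2\bar\lambda)=\tilde u_{\bar\lambda}(x',0)$. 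This contradiction establishes \eqref{supportWepsilon}.

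\emph{Main difficulty.} The delicate part is the compactness step: one needs uniform‑in‑$n$ $C^{1,\alpha}$ bounds up to the boundary $\{y=0\}$ for the translates — this is exactly where the standing hypothesis $\nabla u\in L^\infty(\Sigma_\lambda)$ and the regularity theory \cite{DB,T} are essential — and one needs the \emph{quantitative} gradient lower bound to survive in the limit, so that $P_\infty$ is a genuinely non‑degenerate contact point; it is precisely to keep this quantitative that $\kappa$ is prescribed in the statement and $B^\kappa_{x'}$ is taken inside the gradient‑smallness set. A secondary point to treat with care is that $\mathcal Z_{f(\tilde u)}$ is not the limit of $\mathcal Z_{f(u_n)}$; this is sidestepped by applying Proposition \ref{connectedness} directly to the limit solution $\tilde u$.
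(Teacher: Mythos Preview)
Your proof follows the paper's strategy exactly—contradiction, horizontal translation, $C^{1,\alpha}$ compactness, passage to a limit solution $\tilde u$, and application of Proposition~\ref{connectedness}—and is correct up to and including the point where you obtain (a) $\tilde u\equiv\tilde u_{\bar\lambda}$ on a ball $B_r(P_\infty)$ with $|\nabla\tilde u|\ge\kappa$ there, and (b) $\tilde u<\tilde u_{\bar\lambda}$ in $\Sigma_{\bar\lambda}\setminus\mathcal Z_{f(\tilde u)}$ from Proposition~\ref{connectedness}.

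The paper stops right here, and so should you: since $|\nabla\tilde u|>0$ on $B_r(P_\infty)$, $\tilde u$ is non-constant on the ball, and since $\mathcal N_f$ is discrete the ball contains a point $P_0$ with $\tilde u(P_0)\notin\mathcal N_f$, i.e.\ $P_0\in\Sigma_{\bar\lambda}\setminus\mathcal Z_{f(\tilde u)}$; but $\tilde u(P_0)=\tilde u_{\bar\lambda}(P_0)$ by (a), contradicting (b). Your subsequent re-entry into the ``first contact point'' machinery of Proposition~\ref{connectedness} is therefore unnecessary, and moreover the step you label ``verbatim as in the \emph{Claim} there'' is not actually available: the exclusion of $\partial_y u(z_0)=0$ in that Claim relies on the sliding segment $\mathcal S_1\subset\mathcal U_0$, which exists only because $z_0$ was produced by sliding a ball inside a fixed connected component $\mathcal U_0\subset\Sigma_\lambda\setminus\mathcal Z_{f(u)}$. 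Your $P_\infty$ is not produced that way—there is no $\mathcal U_0$ attached to it—so that argument does not transfer verbatim. Simply delete everything from ``At this point $z_0:=P_\infty$\ldots'' onward and conclude as above.
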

	
	\begin{proof} Since $\bar \lambda < + \infty$, the continuity of $u$ and $u_\lambda$, imply 
	$$u \leq u_{\bar \lambda} \quad \text{in} \; \Sigma_{\bar \lambda}.$$
	
	Assume by contradiction that \eqref{supportWepsilon} is false, so that there
		exist $0 < \delta < \frac{\bar \lambda}{2}$ and $\kappa > 0$ in such a way that, given any $\bar\varepsilon>0$, we find
		$0<\varepsilon \leq \bar\varepsilon$ so that there exists a
		corresponding $x_\varepsilon = (x'_\varepsilon, y_\varepsilon)$
		such that
		$$u(x'_\varepsilon, y_\varepsilon) \geq u_{\bar
			\lambda + \varepsilon} (x'_\varepsilon, y_\varepsilon),$$
		with $x_\varepsilon = (x'_\varepsilon, y_\varepsilon)$
		belonging to the set
		\[\{(x'_\varepsilon,y_\varepsilon)\in \mathbb R^N\,:\,\delta < y_\varepsilon < \bar \lambda - \delta \}\]
		such that $ |\nabla u(x_\varepsilon)| \geq \kappa$ (or possibly $|\nabla u_{\bar
			\lambda + \varepsilon} (x_\varepsilon)| \geq \kappa$).
		
		Taking $\bar\varepsilon = {1}/{n}$, then there exists
		$\varepsilon_n \leq 1/n$ going to zero, and a
		corresponding  sequence $$x_n = (x'_n, y_n) = (x'_{\varepsilon_n}, y_{\varepsilon_n})$$ such that
		$$u(x'_n, y_n) \geq
		u_{\bar \lambda + \varepsilon_n} (x'_n, y_n)$$
		with  $\delta < y_n < \bar \lambda - \delta$. Up to subsequences, let us
		assume that
		$$y_n \rightarrow \bar y \; \text{with} \;
		\delta \leq \bar y \leq \bar \lambda - \delta.$$
		
		Let us define
		$$\tilde u_n(x',y):=u(x'+x'_n, y)$$
		so that $\|\tilde u_n\|_\infty = \| u \|_\infty$. By standard
		regularity theory, see \cite{DB, Li, T}, we have that
		$$\|\tilde u_n\|_{C^{1,\alpha}_{loc}(\overline{\R^N_+})} \leq C,$$
	    for some $ \alpha \in(0,1)$. Therefore, by Ascoli's Theorem we have
		$$\tilde u_n \overset{C^{1}_{loc}(\overline{\R^N_+})}{\longrightarrow} \tilde u$$
		up to subsequences. By construction it follows that
		\begin{itemize}
			\item[(i)] $\tilde u > 0$ in $\R^N_+$, with $\tilde u(x',0)=0$ for every $x' \in \R^{N-1}$;
			\item[(ii)] $\tilde u \leq \tilde u_{\bar \lambda} \quad \text{in} \
			\Sigma_{\bar \lambda}$;
			
			\item[(iii)] $\tilde u(0,\bar y) = \tilde u_{\bar \lambda} (0, \bar y)$;
			
			\item[(iv)] $| \nabla \tilde u(0, \bar y)| \geq \kappa$.
		\end{itemize}
		
		We point out that, in (i), by construction  $\tilde u \geq 0$ in $\R^N_+$. As a consequence, by applying the strong maximum principle (see \cite{pucser, strongpucser, Vaz}) we get that $\tilde u >0$ in $\R^N_+$. Since $|\nabla \tilde{u} (0, \bar y)| \geq \kappa$ there exists $\rho >0$ and a ball $B_\rho(0,\bar y) \subset \Sigma_{\bar \lambda}$ such that $|\nabla \tilde u(x)| \neq 0$ for every $x \in B_\rho(0,\bar y)$. Now, if $\tilde{u}(0,\bar y) \in \mathcal{Z}_{f(u)}$, since $\tilde u$ is non constant  in  $B_\rho(0,\bar y)$, there exists  $P_0 \in B_\rho(0,\bar y)$ such that $\tilde u(P_0) \not \in \mathcal{Z}_{f(u)}$.
		By the classical strong comparison principle we get that
		\begin{equation}\label{disuguaglianza}
		\tilde{u} \equiv \tilde{u}_{\bar{\lambda}} \quad \text{in}\,\, B_\rho(0,\bar y).\end{equation}
		On the other hand, by Proposition \ref{connectedness} it follows that
		\begin{equation}\nonumber
		\tilde{u} < \tilde{u}_{\bar \lambda} \qquad \text{in} \,\,\Sigma_{\bar{\lambda}} \setminus \mathcal{Z}_{f(u)}.
		\end{equation}
		This gives a contradiction with \eqref{disuguaglianza}. Hence we have \eqref{supportWepsilon}.
		
	\end{proof}
	
	Now we are ready to prove the main result of the paper.
	
	\begin{proof}[Proof of Theorem \ref{thm:halfspace}]  Let us assume by contradiction that $0<\bar\lambda<+\infty$, see  \eqref{eq:suplambda}. Set $\lambda_0=\bar \lambda + 2$ and
	$$L_0:=\|u\|_{L^\infty(\{0\leq y \leq 2\bar \lambda + 10\})}+\|\nabla u\|_{L^\infty(\{0\leq y \leq 2\bar \lambda + 10\})}+1>0$$
	and take $\tau_0=\tau_0(N,p,\lambda_0,L_0)>0$ and $\varepsilon_0=\varepsilon_0(N,p,\lambda_0,L_0)>0$ as in Theorem \ref{compprinciplenarrow}. 
		
By  Proposition \ref{localizationSupport} we have that, given $0<\delta<\min\{\frac{\bar \lambda}{2},\frac{\tau_0}{4}\}$ and $0<\kappa<\varepsilon_0$, we find $\bar \varepsilon$ such that, for any $0<\varepsilon \leq \min\{\bar \varepsilon, \frac{\tau_0}{4},1\}$, it follows
\begin{equation}\nonumber
\text{supp}\, W^+_\varepsilon \subset \{0 \leq y \leq \delta\} \cup
\{ \bar \lambda - \delta \leq y \leq \bar \lambda + \varepsilon \} \cup
\left( \bigcup_{x' \in \R^{N-1}} B_{x'}^{\kappa} \right),
\end{equation}
where $B_{x'}^{\kappa}$ is such that
$$B^{\kappa}_{x'} \subseteq \{y \in (0,\bar \lambda + \varepsilon) \ : \ |\nabla u(x',y)|<\kappa, \; |\nabla u_{\bar \lambda + \varepsilon}(x',y)|< \kappa\}.$$

We claim $u \leq u_{\bar \lambda + \varepsilon}$ in $\Sigma_{\bar \lambda + \varepsilon}$. Indeed, let us assume that the open set
$$\mathcal{S}_{(2\delta+\varepsilon, \kappa)}:=\{x \in \Sigma_{\bar \lambda+\varepsilon} \ : \ u(x)-u_{\bar \lambda + \varepsilon}(x)>0\}$$
is not empty, then $u$ and $v=u_{\bar \lambda + \varepsilon}$ satisfy \eqref{clevercomp} with $\lambda=y_0=\bar \lambda + \varepsilon \ (<\lambda_0)$. Since by construction $2\delta + \varepsilon < \tau_0$ and $\kappa < \varepsilon_0$ we can apply Theorem \ref{compprinciplenarrow} to conclude that $u \leq u_{\bar \lambda + \varepsilon}$ on $\mathcal{S}_{(2\delta+\varepsilon, \kappa)}$. This contradicts the definition of $\mathcal{S}_{(2\delta+\varepsilon, \kappa)}$. Hence $\mathcal{S}_{(2\delta+\varepsilon, \kappa)}=\emptyset$.

		    

The latter proves that $u \leq u_{\bar \lambda + \varepsilon}$ in $\Sigma_{\bar \lambda + \varepsilon}$, which contradicts the definition of $\bar \lambda$ and consequently we deduce that $\bar \lambda=+\infty$. This implies the monotonicity of $u$, that is 
$$\frac{\partial u}{\partial y} \geq 0 \quad \text{in} \, \R^N_+.$$ 
Moreover, an application of Lemma \ref{lem:utile} yields 
$$\frac{\partial u}{\partial y} > 0 \quad \text{in} \, \R^N_+ \setminus \mathcal{Z}_{f(u)}.$$

\end{proof}

\end{document}